\newtheorem{theorem}{Theorem}
\newtheorem{axiom}[theorem]{Axiom}
\newtheorem{conjecture}[theorem]{Conjecture}
\newtheorem{corollary}[theorem]{Corollary}
\newtheorem{definition}[theorem]{Definition}
\newtheorem{example}[theorem]{Example}
\newtheorem{exercise}[theorem]{Exercise}
\newtheorem{lemma}[theorem]{Lemma}
\newtheorem{proposition}[theorem]{Proposition}
\newtheorem{remark}[theorem]{Remark}
\newenvironment{proof}[1][Proof]{\noindent\textbf{#1.} }{\ \rule{0.5em}{0.5em}}
\chardef\@x10\chardef\@xv60
\def\tcitime{
\def\@time{%
  \@minute\time\@hour\@minute\divide\@hour\@xv
  \ifnum\@hour<\@x 0\fi\the\@hour:%
  \multiply\@hour\@xv\advance\@minute-\@hour
  \ifnum\@minute<\@x 0\fi\the\@minute
  }}%
\def\x@hyperref#1#2#3{%
   \catcode`\~ = 12
   \catcode`\$ = 12
   \catcode`\_ = 12
   \catcode`\# = 12
   \catcode`\& = 12
   \y@hyperref{#1}{#2}{#3}%
}
\def\y@hyperref#1#2#3#4{%
   #2\ref{#4}#3
   \catcode`\~ = 13
   \catcode`\$ = 3
   \catcode`\_ = 8
   \catcode`\# = 6
   \catcode`\& = 4
}
\def\QCTOpt[#1]#2{%
  \def\QCTOptB{#1}
  \def\QCTOptA{#2}
}
\def\QCTNOpt#1{%
  \def\QCTOptA{#1}
  \let\QCTOptB\empty
}
\def\Qct{%
  \@ifnextchar[{%
    \QCTOpt}{\QCTNOpt}
}
\def\QCBOpt[#1]#2{%
  \def\QCBOptB{#1}%
  \def\QCBOptA{#2}%
}
\def\QCBNOpt#1{%
  \def\QCBOptA{#1}%
  \let\QCBOptB\empty
}
\def\Qcb{%
  \@ifnextchar[{%
    \QCBOpt}{\QCBNOpt}%
}
\def\PrepCapArgs{%
  \ifx\QCBOptA\empty
    \ifx\QCTOptA\empty
      {}%
    \else
      \ifx\QCTOptB\empty
        {\QCTOptA}%
      \else
        [\QCTOptB]{\QCTOptA}%
      \fi
    \fi
  \else
    \ifx\QCBOptA\empty
      {}%
    \else
      \ifx\QCBOptB\empty
        {\QCBOptA}%
      \else
        [\QCBOptB]{\QCBOptA}%
      \fi
    \fi
  \fi
}
\def\GRAPHICSPS#1{%
 \ifcase\GRAPHICSTYPE
   \special{ps: #1}%
 \or
   \special{language "PS", include "#1"}%
 \fi
}%
\def\graffile#1#2#3#4{%
    \bgroup
	   \@inlabelfalse
       \leavevmode
       \@ifundefined{bbl@deactivate}{\def~{\string~}}{\activesoff}%
        \raise -#4 \BOXTHEFRAME{%
           \hbox to #2{\raise #3\hbox to #2{\null #1\hfil}}}%
    \egroup
}%
\def\draftbox#1#2#3#4{%
 \leavevmode\raise -#4 \hbox{%
  \frame{\rlap{\protect\tiny #1}\hbox to #2%
   {\vrule height#3 width\z@ depth\z@\hfil}%
  }%
 }%
}%
\let\nographics=\@msidraft
\newif\ifwasdraft
\def\GRAPHIC#1#2#3#4#5{%
   \ifnum\@msidraft=\@ne\draftbox{#2}{#3}{#4}{#5}%
   \else\graffile{#1}{#3}{#4}{#5}%
   \fi
}
\def\addtoLaTeXparams#1{%
    \edef\LaTeXparams{\LaTeXparams #1}}%
\newif\ifBoxFrame \BoxFramefalse
\newif\ifOverFrame \OverFramefalse
\newif\ifUnderFrame \UnderFramefalse
\def\BOXTHEFRAME#1{%
   \hbox{%
      \ifBoxFrame
         \frame{#1}%
      \else
         {#1}%
      \fi
   }%
}
\def\doFRAMEparams#1{\BoxFramefalse\OverFramefalse\UnderFramefalse\readFRAMEparams#1\end}%
\def\readFRAMEparams#1{%
 \ifx#1\end%
  \let\next=\relax
  \else
  \ifx#1i\dispkind=\z@\fi
  \ifx#1d\dispkind=\@ne\fi
  \ifx#1f\dispkind=\tw@\fi
  \ifx#1t\addtoLaTeXparams{t}\fi
  \ifx#1b\addtoLaTeXparams{b}\fi
  \ifx#1p\addtoLaTeXparams{p}\fi
  \ifx#1h\addtoLaTeXparams{h}\fi
  \ifx#1X\BoxFrametrue\fi
  \ifx#1O\OverFrametrue\fi
  \ifx#1U\UnderFrametrue\fi
  \ifx#1w
    \ifnum\@msidraft=1\wasdrafttrue\else\wasdraftfalse\fi
    \@msidraft=\@ne
  \fi
  \let\next=\readFRAMEparams
  \fi
 \next
 }%
\def\IFRAME#1#2#3#4#5#6{%
      \bgroup
      \let\QCTOptA\empty
      \let\QCTOptB\empty
      \let\QCBOptA\empty
      \let\QCBOptB\empty
      #6%
      \parindent=0pt
      \leftskip=0pt
      \rightskip=0pt
      \setbox0=\hbox{\QCBOptA}%
      \@tempdima=#1\relax
      \ifOverFrame
          \typeout{This is not implemented yet}%
          \show\HELP
      \else
         \ifdim\wd0>\@tempdima
            \advance\@tempdima by \@tempdima
            \ifdim\wd0 >\@tempdima
               \setbox1 =\vbox{%
                  \unskip\hbox to \@tempdima{\hfill\GRAPHIC{#5}{#4}{#1}{#2}{#3}\hfill}%
                  \unskip\hbox to \@tempdima{\parbox[b]{\@tempdima}{\QCBOptA}}%
               }%
               \wd1=\@tempdima
            \else
               \textwidth=\wd0
               \setbox1 =\vbox{%
                 \noindent\hbox to \wd0{\hfill\GRAPHIC{#5}{#4}{#1}{#2}{#3}\hfill}\\%
                 \noindent\hbox{\QCBOptA}%
               }%
               \wd1=\wd0
            \fi
         \else
            \ifdim\wd0>0pt
              \hsize=\@tempdima
              \setbox1=\vbox{%
                \unskip\GRAPHIC{#5}{#4}{#1}{#2}{0pt}%
                \break
                \unskip\hbox to \@tempdima{\hfill \QCBOptA\hfill}%
              }%
              \wd1=\@tempdima
           \else
              \hsize=\@tempdima
              \setbox1=\vbox{%
                \unskip\GRAPHIC{#5}{#4}{#1}{#2}{0pt}%
              }%
              \wd1=\@tempdima
           \fi
         \fi
         \@tempdimb=\ht1
         \advance\@tempdimb by -#2
         \advance\@tempdimb by #3
         \leavevmode
         \raise -\@tempdimb \hbox{\box1}%
      \fi
      \egroup%
}%
\def\DFRAME#1#2#3#4#5{%
  \hfil\break
  \bgroup
     \leftskip\@flushglue
	 \rightskip\@flushglue
	 \parindent\z@
	 \parfillskip\z@skip
     \let\QCTOptA\empty
     \let\QCTOptB\empty
     \let\QCBOptA\empty
     \let\QCBOptB\empty
	 \vbox\bgroup
        \ifOverFrame 
           #5\QCTOptA\par
        \fi
        \GRAPHIC{#4}{#3}{#1}{#2}{\z@}%
        \ifUnderFrame 
           \break#5\QCBOptA
        \fi
	 \egroup
   \egroup
   \break
}%
\def\FFRAME#1#2#3#4#5#6#7{%
  \@ifundefined{floatstyle}
    {
     \begin{figure}[#1]%
    }
    {
	 \ifx#1h
      \begin{figure}[H]%
	 \else
      \begin{figure}[#1]%
	 \fi
	}
  \let\QCTOptA\empty
  \let\QCTOptB\empty
  \let\QCBOptA\empty
  \let\QCBOptB\empty
  \ifOverFrame
    #4
    \ifx\QCTOptA\empty
    \else
      \ifx\QCTOptB\empty
        \caption{\QCTOptA}%
      \else
        \caption[\QCTOptB]{\QCTOptA}%
      \fi
    \fi
    \ifUnderFrame\else
      \label{#5}%
    \fi
  \else
    \UnderFrametrue%
  \fi
  \begin{center}\GRAPHIC{#7}{#6}{#2}{#3}{\z@}\end{center}%
  \ifUnderFrame
    #4
    \ifx\QCBOptA\empty
      \caption{}%
    \else
      \ifx\QCBOptB\empty
        \caption{\QCBOptA}%
      \else
        \caption[\QCBOptB]{\QCBOptA}%
      \fi
    \fi
    \label{#5}%
  \fi
  \end{figure}%
 }%
\def\makeactives{
  \catcode`\"=\active
  \catcode`\;=\active
  \catcode`\:=\active
  \catcode`\'=\active
  \catcode`\~=\active
}
   \gdef\activesoff{%
      \def"{\string"}
      \def;{\string;}
      \def:{\string:}
      \def'{\string'}
      \def~{\string~}
    }
\def\FRAME#1#2#3#4#5#6#7#8{%
 \bgroup
 \ifnum\@msidraft=\@ne
   \wasdrafttrue
 \else
   \wasdraftfalse%
 \fi
 \def\LaTeXparams{}%
 \dispkind=\z@
 \def\LaTeXparams{}%
 \doFRAMEparams{#1}%
 \ifnum\dispkind=\z@\IFRAME{#2}{#3}{#4}{#7}{#8}{#5}\else
  \ifnum\dispkind=\@ne\DFRAME{#2}{#3}{#7}{#8}{#5}\else
   \ifnum\dispkind=\tw@
    \edef\@tempa{\noexpand\FFRAME{\LaTeXparams}}%
    \@tempa{#2}{#3}{#5}{#6}{#7}{#8}%
    \fi
   \fi
  \fi
  \ifwasdraft\@msidraft=1\else\@msidraft=0\fi{}%
  \egroup
 }%
\def\TEXUX#1{"texux"}
\long\def\QQQ#1#2{%
     \long\expandafter\def\csname#1\endcsname{#2}}%
\long\def\QQA#1#2{}%
\def\QTR#1#2{{\csname#1\endcsname #2}}
\def\EXPAND#1[#2]#3{}%
\def\NOEXPAND#1[#2]#3{}%
\def\LaTeXparent#1{}%
\def\ChildStyles#1{}%
\def\ChildDefaults#1{}%
\def\QTagDef#1#2#3{}%
  \providecommand{\UNICODE}[2][]{\protect\rule{.1in}{.1in}}
  \providecommand{\U}[1]{\protect\rule{.1in}{.1in}}
\def\QQfnmark#1{\footnotemark}
 \def\abstract{%
  \if@twocolumn
   \section*{Abstract (Not appropriate in this style!)}%
   \else \small 
   \begin{center}{\bf Abstract\vspace{-.5em}\vspace{\z@}}\end{center}%
   \quotation 
   \fi
  }%
   \def\registered{\relax\ifmmode{}\r@gistered
                    \else$\m@th\r@gistered$\fi}%
 \def\r@gistered{^{\ooalign
  {\hfil\raise.07ex\hbox{$\scriptstyle\rm\text{R}$}\hfil\crcr
  \mathhexbox20D}}}}{}%
\newdimen\theight
\def\newfmtname{LaTeX2e}
  \DeclareOldFontCommand{\rm}{\normalfont\rmfamily}{\mathrm}
  \DeclareOldFontCommand{\sf}{\normalfont\sffamily}{\mathsf}
  \DeclareOldFontCommand{\tt}{\normalfont\ttfamily}{\mathtt}
  \DeclareOldFontCommand{\bf}{\normalfont\bfseries}{\mathbf}
  \DeclareOldFontCommand{\it}{\normalfont\itshape}{\mathit}
  \DeclareOldFontCommand{\sl}{\normalfont\slshape}{\@nomath\sl}
  \DeclareOldFontCommand{\sc}{\normalfont\scshape}{\@nomath\sc}
\def\alpha{{\Greekmath 010B}}%
\def\beta{{\Greekmath 010C}}%
\def\gamma{{\Greekmath 010D}}%
\def\delta{{\Greekmath 010E}}%
\def\epsilon{{\Greekmath 010F}}%
\def\zeta{{\Greekmath 0110}}%
\def\eta{{\Greekmath 0111}}%
\def\theta{{\Greekmath 0112}}%
\def\iota{{\Greekmath 0113}}%
\def\kappa{{\Greekmath 0114}}%
\def\lambda{{\Greekmath 0115}}%
\def\mu{{\Greekmath 0116}}%
\def\nu{{\Greekmath 0117}}%
\def\xi{{\Greekmath 0118}}%
\def\pi{{\Greekmath 0119}}%
\def\rho{{\Greekmath 011A}}%
\def\sigma{{\Greekmath 011B}}%
\def\tau{{\Greekmath 011C}}%
\def\upsilon{{\Greekmath 011D}}%
\def\phi{{\Greekmath 011E}}%
\def\chi{{\Greekmath 011F}}%
\def\psi{{\Greekmath 0120}}%
\def\omega{{\Greekmath 0121}}%
\def\varepsilon{{\Greekmath 0122}}%
\def\vartheta{{\Greekmath 0123}}%
\def\varpi{{\Greekmath 0124}}%
\def\varrho{{\Greekmath 0125}}%
\def\varsigma{{\Greekmath 0126}}%
\def\varphi{{\Greekmath 0127}}%
\def\nabla{{\Greekmath 0272}}
\def\FindBoldGroup{%
   {\setbox0=\hbox{$\mathbf{x\global\edef\theboldgroup{\the\mathgroup}}$}}%
}
\def\Greekmath#1#2#3#4{%
    \if@compatibility
        \ifnum\mathgroup=\symbold
           \mathchoice{\mbox{\boldmath$\displaystyle\mathchar"#1#2#3#4$}}%
                      {\mbox{\boldmath$\textstyle\mathchar"#1#2#3#4$}}%
                      {\mbox{\boldmath$\scriptstyle\mathchar"#1#2#3#4$}}%
                      {\mbox{\boldmath$\scriptscriptstyle\mathchar"#1#2#3#4$}}%
        \else
           \mathchar"#1#2#3#4%
        \fi 
    \else 
        \FindBoldGroup
        \ifnum\mathgroup=\theboldgroup 
           \mathchoice{\mbox{\boldmath$\displaystyle\mathchar"#1#2#3#4$}}%
                      {\mbox{\boldmath$\textstyle\mathchar"#1#2#3#4$}}%
                      {\mbox{\boldmath$\scriptstyle\mathchar"#1#2#3#4$}}%
                      {\mbox{\boldmath$\scriptscriptstyle\mathchar"#1#2#3#4$}}%
        \else
           \mathchar"#1#2#3#4%
        \fi     	    
	  \fi}
\newif\ifGreekBold  \GreekBoldfalse
\let\SAVEPBF=\pbf
\def\pbf{\GreekBoldtrue\SAVEPBF}%
  \newcounter{equationnumber}  
  \def\mathletters{%
     \addtocounter{equation}{1}
     \edef\@currentlabel{\theequation}%
     \setcounter{equationnumber}{\c@equation}
     \setcounter{equation}{0}%
     \edef\theequation{\@currentlabel\noexpand\alph{equation}}%
  }
    \def\BibTeX{{\rm B\kern-.05em{\sc i\kern-.025em b}\kern-.08em
                 T\kern-.1667em\lower.7ex\hbox{E}\kern-.125emX}}}{}%
\def\AmS{{\protect\usefont{OMS}{cmsy}{m}{n}%
                A\kern-.1667em\lower.5ex\hbox{M}\kern-.125emS}}}{}%
\def\@@eqncr{\let\@tempa\relax
    \ifcase\@eqcnt \def\@tempa{& & &}\or \def\@tempa{& &}%
      \else \def\@tempa{&}\fi
     \@tempa
     \if@eqnsw
        \iftag@
           \@taggnum
        \else
           \@eqnnum\stepcounter{equation}%
        \fi
     \fi
     \global\tag@false
     \global\@eqnswtrue
     \global\@eqcnt\z@\cr}
\def\TCItag{\@ifnextchar*{\@TCItagstar}{\@TCItag}}
\def\@TCItag#1{%
    \global\tag@true
    \global\def\@taggnum{(#1)}}
\def\@TCItagstar*#1{%
    \global\tag@true
    \global\def\@taggnum{#1}}
\def\tsum{\mathop{\textstyle \sum }}%
\begin{document}

\title{The Taylor Measure and its Applications}
\author{Athanasios C. Micheas \\
Department of Statistics, University of Missouri, \\
146 Middlebush Hall, Columbia, MO 65211-6100, USA,\\
email: micheasa@missouri.edu}
\maketitle

\begin{abstract}
We propose and study a novel collection of signed measures, which will be
apply called Taylor measures. Stochastic versions of the new measures are
also defined and studied. We illustrate, through examples, how the
deterministic and stochastic versions of the proposed Taylor measures emerge
as a unifying framework that includes many concepts from mathematics and
probability theory as special cases.
\end{abstract}

\textbf{Mathematics Subject Classifications}: Primary 28A75, 60B05;
Secondary 46C05, 46N30, 60B11, 60E05

\textbf{Keywords}: Discrete Probability Measure, Hilbert Space, Polish
Space, Stochastic Taylor Measure, Taylor Measure, Taylor Probability Measures

\section{Introduction}

Taylor expansions provide a fundamental method of approximation in
mathematics and related fields, with many important applications emerging
over the years, most notably in numerical analysis. In particular, consider
a real valued function $f:\Re \rightarrow \Re $ that is sufficiently smooth
about a point $x_{0}\in \Re $. Then $f$ assumes a Taylor expansion at the
point $x\in \Re $ via 
\begin{equation}
f(x)=\sum_{n=0}^{+\infty }\frac{f^{(n)}(x_{0})}{n!}%
(x-x_{0})^{n}=T_{M,x_{0}}(x)+R_{M,x_{0}}(x),  \label{FullTaylorExp}
\end{equation}%
where $T_{M,x_{0}}(x)=\sum_{n=0}^{M}\frac{f^{(n)}(x_{0})}{n!}(x-x_{0})^{n}$
is the $M^{th}$ order Taylor polynomial and $R_{M,x_{0}}(x)$ the remainder
term with the property $R_{M,x_{0}}(x)=o(|x-x_{0}|^{M+1})$ as $%
|x-x_{0}|\rightarrow 0$.

Applications of Taylor{'}s theorem include numerical algorithms for
optimization (\cite{More1978}; \cite{Conn2000}), state estimation (\cite%
{Sarkka2013}, Ch. 5), ordinary differential equations (\cite{Hairer1993},
Ch. 2), Taylor expansions for vector valued functions (\cite{feng2014exact}%
), for solutions of stochastic partial differential equations (\cite%
{jentzen2010taylor}), and for hitting probabilities in Brownian motion (\cite%
{hobson1999taylor}) and approximation of exponential integrals in Bayesian
statistics (\cite{Raudenbush2000}). Algorithms based on Taylor{'}s
approximation that can be viewed as statistical inference problems include
spline interpolation (\cite{Diaconis1988}; \cite{Kimeldorf1970}), numerical
quadrature (\cite{Diaconis1988}; \cite{Karvonen2017}; \cite{Karvonenetal2018}%
, differential equations (\cite{Schober2014}; \cite{Schober2019}; \cite%
{Teymur2016}), and linear algebra (\cite{Cockayneetal2019a}; \cite%
{Hennig2015}).

On the other hand, measure theory is another fundamental concept of
mathematics, that formalizes the concept of length, area, integration and
probability, to name but a few, which can be essential in understanding the
behavior of functions and their approximations in various contexts. For
example, both measure theory and Taylor's theorem involve approximating
functions, e.g., integration wrt a measure can be viewed as a way to sum up
values of functions over a set, which can be approximated using Taylor
polynomials.

Moreover, Taylor's theorem is used in analysis to study functions that are
analytic, or more generally, measurable, and the convergence of Taylor
series can be studied using concepts from measure theory, particularly when
it comes to understanding the behavior of functions in different spaces. In
addition, several topics from real analysis and functional analysis, borrow
strength from both measure theory and Taylor's theorem, which have led to
methodologies with great consequences and advancements in the theory of
mathematics and statistics.

Recent texts on real analysis, measure and probability theory that study all
aspects of these classic methodologies and more include \cite%
{Royden1989,FristedtGray1997,fremlin2000measure,Vestrup2003,
Dudley2004,Billingsley2013, klenke2013probability,
kallenberg2017random,micheas2018theory,
durrett2019probability,galambos2023advanced}, and \cite%
{ghahramani2024fundamentals}. Some recent papers on measure and probability
theory include \cite{jonas2022generalized, acu2023discrete} for probability
measures and random variables, \cite{delladio2024superdensity} for Radon
measures in $\Re^p$, \cite{savare2021sobolev} for Sobolev spaces in extended
metric-measure spaces, and \cite{scholpple2025spaces} for reproducing kernel
Hilbert spaces.

In this paper, we exemplify this important interplay between measure theory
and Taylor's theorem, by introducing a new collection of signed measures
motivated by equation (\ref{FullTaylorExp}), that provides a unifying
framework in mathematics and probability theory. In particular, as we will
show thoughout the exposition below, various mathematical and probabilistic
concepts are special cases of this collection of measures. The unifying
framework we propose emerges as a generalization of Taylor's theorem,
provides approximations of analytic functions via random Taylor measures,
includes as special cases significant stochastic processes like Brownian
motion, martingales, random walks, time series models, and more.

The paper proceeds as follows; in Section 2 we introduce signed Taylor
measures and then study properties of the space of Taylor measures
extensively, in Section 3. We introduce the concepts of the positive and
negative Taylor probability measures and densities in Section 4. The
stochastic version of Taylor measures is introduced in Section 5, were we
illustrate that many well known stochastic processes are special cases. In
Section 6 we use Taylor measures to create a new space of functions, study
its properties and connect the proposed Taylor measures with Taylor's
theorem. Concluding remarks are given in the last section.

\section{Taylor Measures}

Let $B\in \mathcal{B}(\mathbb{N}),$ a Borel set of $\mathbb{N}=\{0,1,2,...\}$%
, and define a set function $T_{\gamma ,\mathbf{a}}:(\mathbb{N},\mathcal{B}(%
\mathbb{N}))\rightarrow (\Re ,\mathcal{B}(\Re ))$ by

\begin{equation}
T_{\gamma ,\mathbf{a}}(B)=\sum\limits_{n\in B}a_{n}\frac{\gamma ^{n}}{n!},
\label{TaylorMeasure}
\end{equation}%
where $\gamma \in \Re ,$ $\mathbf{a}=[a_{0},a_{1},a_{2},...],$ with $%
a_{n}\in \Re ,$ for all $n\in \mathbb{N}.$ Further assume that $T_{\gamma ,%
\mathbf{a}}(\varnothing )=0,$ for all $a_{n},\gamma \in \Re .$ Then it is
straightforward to prove the following.

\begin{theorem}[Taylor Measure]
The set function $T_{\gamma ,\mathbf{a}}(.)$ is a signed measure, and will
henceforth be called the Taylor measure. In addition, $T_{\gamma ,\mathbf{a}%
} $ is a $\sigma $-finite signed measure, and it becomes a finite signed
measure if one of the following conditions holds for the sequence $\mathbf{a}
$:\newline
a) $a_{n}$ are uniformly bounded, i.e., $|a_{n}|\leq M,$ $\forall n$, for
some $M>0.$\newline
b) $a_{n}$ are asymptotically equivalent to $Mb^{n}$, i.e., $a_{n}\thicksim
Mb^{n},$ for some $b,M\in \Re $.
\end{theorem}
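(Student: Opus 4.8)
The plan is to exploit the fact that the underlying space is the countable discrete space $(\mathbb{N},\mathcal{B}(\mathbb{N}))$, so that $\mathcal{B}(\mathbb{N})$ is the full power set and any set function defined by summation over points is completely determined by its values on singletons, namely $T_{\gamma,\mathbf{a}}(\{n\}) = a_n\gamma^n/n!$. Rather than verify additivity directly on the signed sum, I would build a Jordan-type decomposition by hand: writing $f(n)=a_n\gamma^n/n!$, define the two nonnegative set functions $T^{+}(B)=\sum_{n\in B} f(n)^{+}$ and $T^{-}(B)=\sum_{n\in B} f(n)^{-}$, so that $T_{\gamma,\mathbf{a}} = T^{+}-T^{-}$ on every $B\in\mathcal{B}(\mathbb{N})$.

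The core of the signed-measure claim is countable additivity, and I would obtain it from $T^{+}$ and $T^{-}$ separately. Each of these is a sum of nonnegative terms, so for any partition of $B$ into disjoint pieces $B_i$ the iterated sum may be rearranged freely (Tonelli/monotone convergence for series of nonnegative terms), giving $T^{\pm}(\bigcup_i B_i)=\sum_i T^{\pm}(B_i)$; hence each $T^{\pm}$ is a genuine (possibly infinite) measure, and their difference inherits countable additivity. Combined with the standing hypothesis $T_{\gamma,\mathbf{a}}(\varnothing)=0$, this yields the signed-measure property. The one genuine subtlety, and the step I expect to be the main obstacle, is the standard requirement that a signed measure assume at most one of the values $\pm\infty$ and that the defining series not depend on the order of summation; this is exactly what the decomposition $T_{\gamma,\mathbf{a}}=T^{+}-T^{-}$ controls, since rearrangement invariance is automatic for the nonnegative pieces and the difference is well defined as soon as at least one of $T^{+}(\mathbb{N}), T^{-}(\mathbb{N})$ is finite.

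For $\sigma$-finiteness I would simply write $\mathbb{N}=\bigcup_{n\in\mathbb{N}}\{n\}$, a countable union on which the total variation $|T_{\gamma,\mathbf{a}}|(\{n\})=|a_n|\,|\gamma|^n/n!$ is finite for every $n$, which is precisely the definition. Finally, for the finite-measure cases I would bound the total variation $|T_{\gamma,\mathbf{a}}|(\mathbb{N})=\sum_{n\geq 0}|a_n|\,|\gamma|^n/n!$ by a convergent exponential series. Under (a), $|a_n|\leq M$ gives $|T_{\gamma,\mathbf{a}}|(\mathbb{N})\leq M\sum_{n\geq 0}|\gamma|^n/n! = M e^{|\gamma|}<\infty$. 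Under (b), $a_n\sim Mb^n$ means $a_n/(Mb^n)\to 1$, so $|a_n|\leq C|b|^n$ for all $n$ beyond some $N$ and a constant $C$; splitting off the finitely many initial terms and comparing the tail with $\sum_{n\geq 0}|b\gamma|^n/n! = e^{|b\gamma|}$ shows the full series converges. Since the exponential series converges for every real argument, no restriction on $b$ or $\gamma$ is required, and in both cases the finiteness of $|T_{\gamma,\mathbf{a}}|(\mathbb{N})$ is exactly the assertion that $T_{\gamma,\mathbf{a}}$ is a finite signed measure.
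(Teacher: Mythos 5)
Your proof is correct, and it takes a genuinely different---and more careful---route than the paper's. The paper verifies countable additivity directly, splitting $\sum_{n\in\bigcup_i B_i}a_n\gamma^n/n!$ into $\sum_i\sum_{n\in B_i}a_n\gamma^n/n!$ with no justification; for a series with terms of both signs this rearrangement is precisely the delicate point, since a conditionally convergent series can change value or diverge under reordering. Your hand-built Jordan decomposition $T_{\gamma,\mathbf{a}}=T^{+}-T^{-}$, with $T^{\pm}$ formed from the pointwise positive and negative parts of $f(n)=a_n\gamma^n/n!$, supplies exactly the missing justification: rearrangement is automatic for nonnegative series (Tonelli), and the difference inherits countable additivity once at least one of $T^{\pm}(\mathbb{N})$ is finite. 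That qualification, which you rightly single out as the main obstacle, also exposes something the paper's statement glosses over: for arbitrary $a_n,\gamma\in\Re$ (e.g.\ choosing $a_n$ with $a_n\gamma^n/n!=(-1)^n$) both $T^{+}(\mathbb{N})$ and $T^{-}(\mathbb{N})$ are infinite, the series over $\mathbb{N}$ has no order-independent value, and the set function is not a well-defined signed measure at all, so some such hypothesis is genuinely needed rather than a technicality. Your finiteness bounds are likewise tighter: you bound the total variation by $Me^{|\gamma|}$ under (a) and, after discarding finitely many initial terms, by a multiple of $e^{|b\gamma|}$ under (b), with absolute values throughout; the paper instead writes $T_{\gamma,\mathbf{a}}(\mathbb{N})<Me^{\gamma}$ and $Me^{b\gamma}$, which neither controls $|T_{\gamma,\mathbf{a}}|(\mathbb{N})$ nor is valid for negative $\gamma$ or negative terms, and its step ``sending $\varepsilon$ to $0$'' conflates a tail estimate with the full series. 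In short, the paper's approach buys brevity; yours buys an actual proof of the rearrangement step, correct sign handling, and a precise statement of when the object being christened a signed measure exists.
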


\begin{proof}
By definition, $T_{\gamma ,\mathbf{a}}(\varnothing )=0$. Moreover, for any
collection of disjoint Borel sets $\{B_{i}\}$ of $\mathcal{B}(\mathbb{N}),$
we have%
\begin{equation*}
T_{\gamma ,\mathbf{a}}\left( \bigcup\limits_{i}B_{i}\right)
=\sum\limits_{n\in \bigcup\limits_{i}B_{i}}a_{n}\frac{\gamma ^{n}}{n!}%
=\sum\limits_{i}\sum\limits_{n\in B_{i}}a_{n}\frac{\gamma ^{n}}{n!}%
=\sum\limits_{i}T_{\gamma ,\mathbf{a}}(B_{i}),
\end{equation*}%
so that $T_{\gamma ,\mathbf{a}}$ is countably additive. Therefore, $%
T_{\gamma ,\mathbf{a}}$ is a signed measure. When $\gamma \geq 0,$ and $%
a_{n}\geq 0,$ for all $n\in \mathbb{N},$ we have $T_{\gamma ,\mathbf{a}%
}(B)\geq 0,$ $\forall B\in \mathcal{B}(\mathbb{N})$, and $T_{\gamma ,\mathbf{%
a}}$ is a measure.\newline
Now write $\mathbb{N}=\bigcup\limits_{i=0}^{+\infty }\{i\},$ with $T_{\gamma
,\mathbf{a}}(\{i\})=a_{i}\frac{\gamma ^{i}}{i!}<+\infty ,$ so that $%
T_{\gamma ,\mathbf{a}}$ is a $\sigma $-finite signed measure. Moreover,
under condition a), we have%
\begin{equation*}
T_{\gamma ,\mathbf{a}}(\mathbb{N})=\sum\limits_{n=0}^{+\infty }a_{n}\frac{%
\gamma ^{n}}{n!}<M\sum\limits_{n=0}^{+\infty }\frac{\gamma ^{n}}{n!}%
=Me^{\gamma }<+\infty .
\end{equation*}%
Under condition b), since $a_{n}\thicksim Mb^{n},$ $\forall \varepsilon >0,$ 
$\exists n_{0}>0,$ such that for all $n>n_{0},$ we have 
\begin{equation*}
\left\vert \frac{a_{n}}{Mb^{n}}-1\right\vert <\varepsilon \Rightarrow
a_{n}<Mb^{n}(\varepsilon +1),
\end{equation*}%
so that sending $\varepsilon $ to $0$ we obtain%
\begin{equation*}
T_{\gamma ,\mathbf{a}}(\mathbb{N})=\sum\limits_{n=0}^{+\infty }a_{n}\frac{%
\gamma ^{n}}{n!}<M(\varepsilon +1)\sum\limits_{n=n_{0}}^{+\infty }\frac{%
b^{n}\gamma ^{n}}{n!}=M\sum\limits_{n=0}^{+\infty }\frac{(b\gamma )^{n}}{n!}%
=Me^{b\gamma }<+\infty ,
\end{equation*}%
and therefore $T_{\gamma ,\mathbf{a}}$ becomes a finite signed measure.
\end{proof}

Following \cite{Dudley2004} (Corollary 5.6.2), the Lebesgue decomposition
and Radon-Nikodym theorems also hold for finite signed measures. The latter
theorem is particularly important since it leads to the following definition.

\begin{definition}[Taylor Derivative]
Consider a finite Taylor measure $T_{\gamma ,\mathbf{a}}$ defined on the
measurable space $(\mathbb{N},\mathcal{B}(\mathbb{N})),$ and let $\upsilon $
denote a $\sigma $-finite measure on $(\mathbb{N},\mathcal{B}(\mathbb{N})).$
Assume that $T_{\gamma ,\mathbf{a}}$ is absolutely continuous wrt $\upsilon $%
, denoted by $T_{\gamma ,\mathbf{a}}<<\upsilon $. Then, there exists a
measurable function $p_{T}:(\mathbb{N},\mathcal{B}(\mathbb{N}))\rightarrow
(\Re ,\mathcal{B}(\Re ))$, such that%
\begin{equation}
T_{\gamma ,\mathbf{a}}(B)=\int\limits_{B}p_{T}(x)\upsilon (dx),
\label{TaylorDensity}
\end{equation}%
for all $B\in \mathcal{B}(\mathbb{N})$. The Radon-Nikodym derivative $p_{T}$%
, denoted by $p_{T}=\left[ \frac{dT_{\gamma ,\mathbf{a}}}{d\upsilon }\right] 
$ a.e. wrt $\upsilon $, will henceforth be known as the Taylor derivative.
\end{definition}

A desirable choice for $\upsilon $ is counting measure, in which case
equation (\ref{TaylorDensity}) reduces to%
\begin{equation}
T_{\gamma ,\mathbf{a}}(B)=\sum\limits_{n\in B}p_{T}(n),
\label{TaylorMeasureviaDensity}
\end{equation}%
so that in view of (\ref{TaylorMeasure}), we have%
\begin{equation}
p_{T}(n)=T_{\gamma ,\mathbf{a}}(\{n\})=a_{n}\frac{\gamma ^{n}}{n!},
\label{TaylorDensityatn}
\end{equation}%
$n\in \mathbb{N}.$ We note that $p_{T}$ is not a density in the usual
statistical sense, i.e., a probability mass function with $p_{T}(n)\geq 0,$
and $\sum\limits_{n=0}^{+\infty }p_{T}(n)=1.$

Now, using Jordan decomposition theorem (e.g., \cite{micheas2018theory},
Theorem 3.8) for the signed measure $T_{\gamma ,\mathbf{a}}$, there exist
two mutually singular measures denoted by $T_{\gamma ,\mathbf{a}}^{+}$ and $%
T_{\gamma ,\mathbf{a}}^{-}$ such that%
\begin{equation*}
T_{\gamma ,\mathbf{a}}=T_{\gamma ,\mathbf{a}}^{+}-T_{\gamma ,\mathbf{a}}^{-},
\end{equation*}%
with this decomposition being unique. More precisely, if $\left\{
A^{+},A^{-}\right\} $ is a Hahn decomposition (e.g., \cite{micheas2018theory}%
, Theorem 3.7) of $T_{\gamma ,\mathbf{a}},$ we have%
\begin{eqnarray*}
T_{\gamma ,\mathbf{a}}^{+}(B) &=&T_{\gamma ,\mathbf{a}}(B\cap A^{+}),\text{
and} \\
T_{\gamma ,\mathbf{a}}^{-}(B) &=&-T_{\gamma ,\mathbf{a}}(B\cap A^{-}),
\end{eqnarray*}%
with $0\leq T_{\gamma ,\mathbf{a}}^{+}(B),T_{\gamma ,\mathbf{a}%
}^{-}(B)<+\infty ,$ $\forall B\in \mathcal{B}(\mathbb{N}).$

We will refer to the measures $T_{\gamma ,\mathbf{a}}^{+}$ and $T_{\gamma ,%
\mathbf{a}}^{-}$ as the positive and negative finite Taylor measures, which
in general signed measure theory are known as the upper and lower variations
of $T_{\gamma ,\mathbf{a}}$.

Denote the collection of all finite Taylor measures by%
\begin{equation}
\mathcal{T}^{\mathcal{F}}=\left\{ T_{\gamma ,\mathbf{a}}:T_{\gamma ,\mathbf{a%
}}(B)=\sum\limits_{n\in B}a_{n}\frac{\gamma ^{n}}{n!}\text{, }B\in \mathcal{B%
}(\mathbb{N}),\text{ }a_{n},\gamma \in \Re ,\text{ with }T_{\gamma ,\mathbf{a%
}}(\mathbb{N})<+\infty \right\} ,  \label{FiniteTaylorMeasureSpace}
\end{equation}%
and denote by $\mathcal{T}_{\gamma }^{\mathcal{F}}\subseteq \mathcal{T}^{%
\mathcal{F}},$ the collection of finite Taylor measures indexed by a fixed,
common, $\gamma \in \Re .$ Owing to the form of the signed measures $%
T_{\gamma ,\mathbf{a}}\in \mathcal{T}^{\mathcal{F}},$ it is natural to
consider the following conjecture, that will help us give some insight on
the structure of the space $\mathcal{T}^{\mathcal{F}}$.

\begin{conjecture}
\label{ConjectureNumbers}For any $a_{n},b,c_{n},d\in \Re $ and $n\in \mathbb{%
N}$, there exist $u_{n},\gamma \in \Re ,$ such that%
\begin{equation}
a_{n}b^{n}+c_{n}d^{n}=u_{n}\gamma ^{n},  \label{Conjecture}
\end{equation}%
for all $n\in \mathbb{N}.$
\end{conjecture}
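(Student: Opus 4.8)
The plan is to observe that the claimed identity imposes essentially no constraint on the pair $(u_n, \gamma)$ beyond excluding the single degenerate value $\gamma = 0$, so the proof reduces to exhibiting one valid choice. First I would fix any nonzero real number $\gamma$ — the cleanest being $\gamma = 1$ — and then solve for the sequence directly by setting $u_n = (a_n b^n + c_n d^n)/\gamma^n$. Substituting back gives $u_n \gamma^n = a_n b^n + c_n d^n$ for every $n \in \mathbb{N}$, which is exactly the required identity; with $\gamma = 1$ this collapses to the even simpler $u_n = a_n b^n + c_n d^n$.

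The only point requiring a moment's care is the exclusion of $\gamma = 0$. If $\gamma = 0$ then $\gamma^n = 0$ for all $n \geq 1$ while $\gamma^0 = 1$, so the right-hand side $u_n \gamma^n$ vanishes for $n \geq 1$ irrespective of $u_n$, forcing $a_n b^n + c_n d^n = 0$, which fails in general. Since the statement asks only for the existence of some $\gamma \in \mathbb{R}$, I would simply take $\gamma \neq 0$ and the difficulty never materializes. Thus there is no genuine obstacle; the one thing worth flagging is that neither $\gamma$ nor the sequence $u_n$ is uniquely determined, since any nonzero $\gamma$ works with a correspondingly rescaled $u_n$.

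It is worth recording the structural content behind this elementary computation, since that is presumably why the statement is isolated as a conjecture. Read through the definition (\ref{TaylorMeasure}), the identity says precisely that the sum of two Taylor measures $T_{b,\mathbf{a}}$ and $T_{d,\mathbf{c}}$ is again a Taylor measure, because $T_{b,\mathbf{a}}(B) + T_{d,\mathbf{c}}(B) = \sum_{n \in B}(a_n b^n + c_n d^n)/n! = \sum_{n \in B} u_n \gamma^n / n! = T_{\gamma,\mathbf{u}}(B)$. The conjecture is therefore the algebraic mechanism that makes $\mathcal{T}^{\mathcal{F}}$ closed under addition. More conceptually, for any fixed $\gamma \neq 0$ the assignment $\mathbf{a} \mapsto T_{\gamma,\mathbf{a}}$ already surjects onto all suitably finite signed measures on $(\mathbb{N}, \mathcal{B}(\mathbb{N}))$, since the point masses are recovered via $a_n = n!\, T_{\gamma,\mathbf{a}}(\{n\})/\gamma^n$; from this vantage closure under sums, and hence the conjecture, is immediate.
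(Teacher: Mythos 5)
Your proposal is correct, and it takes a more direct route than the paper's own proof. The paper argues by induction on $n$: the base case observes that $g=a_{1}b+c_{1}d$ factors as $u_{1}\gamma$ in infinitely many ways, and the inductive step rewrites $a_{n+1}b^{n+1}+c_{n+1}d^{n+1}=(a_{n+1}b)b^{n}+(c_{n+1}d)d^{n}$, invokes the case-$n$ statement for these rescaled coefficients, and absorbs one factor of $\gamma$ by taking $u_{n+1}=u_{n}/\gamma$. That induction is really a detour around the single observation your proof isolates: once $\gamma\neq 0$ is fixed, each equation $u_{n}\gamma^{n}=a_{n}b^{n}+c_{n}d^{n}$ has the lone unknown $u_{n}$ and is solved by division. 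Your explicit construction ($\gamma=1$, $u_{n}=a_{n}b^{n}+c_{n}d^{n}$) also makes transparent that one and the same $\gamma$ serves all $n$ simultaneously --- a point the paper's induction leaves implicit, since its inductive step tacitly reuses the same $\gamma$ when re-invoking the hypothesis for the new coefficients $a_{n+1}b$ and $c_{n+1}d$, which is legitimate only because of precisely the freedom you exploit. Your treatment of the degenerate case matches the paper's closing remark (that $\gamma\neq 0$ is forced when the left-hand side is nonzero, and $u_{n}=0$ suffices when it vanishes), and your structural reading --- that the conjecture is exactly the mechanism making $\mathcal{T}^{\mathcal{F}}$ closed under linear combinations --- is how the paper actually uses it afterwards, in equation (\ref{LinearForm}) and Lemma \ref{LemmaLinearity}.
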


\begin{proof}
For $n=1$, we have a real number $g=a_{1}b+c_{1}d,$ and there are infinite $%
u_{1},\gamma \in \Re ,$ that satisfy (\ref{Conjecture}), with $g=u_{1}\gamma 
$. Assume that (\ref{Conjecture}) holds for $n\in \mathbb{N}$. We prove it
holds for $n+1.$ We have%
\begin{equation*}
a_{n+1}b^{n+1}+c_{n+1}d^{n+1}=(a_{n+1}b)b^{n}+(c_{n+1}d)d^{n}=u_{n}\gamma
^{n}=\left( \frac{u_{n}}{\gamma }\right) \gamma ^{n+1},
\end{equation*}%
so that by induction the claim holds. Clearly, the $u_{n},\gamma \in \Re $
are not unique. Note that if the LHS in (\ref{Conjecture}) is non-zero we
must have $\gamma \neq 0,$ and if the LHS is zero we can simply choose $%
u_{n}=0,$ $\forall n\in \mathbb{N},$ and $\gamma \in \Re $.
\end{proof}

Note that the converse of the latter conjecture is trivially satisfied;
given $u_{n}$, $\gamma \in \Re $, take $b=d=\gamma ,$ $a_{n}=u_{n}^{+}=\max
\{0,u_{n}\},$ and $c_{n}=u_{n}^{-}=\max \{0,-u_{n}\}$.

In order to study properties of the space of these new measures we require
the concept of length within the space. The standard approach is to use the
total variation of the finite Taylor measure $T_{\gamma ,\mathbf{a}}$
defined by%
\begin{equation*}
\left\Vert T_{\gamma ,\mathbf{a}}\right\Vert (B)=T_{\gamma ,\mathbf{a}%
}^{+}(B)+T_{\gamma ,\mathbf{a}}^{-}(B)<+\infty ,
\end{equation*}%
$\forall B\in \mathcal{B}(\mathbb{N}).$ We investigate properties of $%
\mathcal{T}^{\mathcal{F}}$ next.

\section{Properties of the Space of Finite Taylor Measures}

\label{PropertiesFTM}From general theory of signed measures, we know that
the spaces of finite Taylor measures $\mathcal{T}^{\mathcal{F}}$ and $%
\mathcal{T}_{\gamma }^{\mathcal{F}},$ equipped with the total variation norm 
$\left\Vert T_{\gamma ,\mathbf{a}}\right\Vert ,$ are Banach spaces
(complete, linear, normed space). However, spaces of signed measures are not
Hilbert spaces (inner product, linear, complete, metric space) in general,
using total variation since it does not satisfy the parallelogram identity%
\begin{equation*}
\left\Vert T_{\gamma ,\mathbf{a}_{1}}^{(1)}+T_{\gamma ,\mathbf{a}%
_{2}}^{(2)}\right\Vert ^{2}+\left\Vert T_{\gamma ,\mathbf{a}%
_{1}}^{(1)}-T_{\gamma ,\mathbf{a}_{2}}^{(2)}\right\Vert ^{2}=2\left(
\left\Vert T_{\gamma ,\mathbf{a}_{1}}^{(1)}\right\Vert ^{2}+\left\Vert
T_{\gamma ,\mathbf{a}_{2}}^{(2)}\right\Vert ^{2}\right) ,
\end{equation*}%
even in $\mathcal{T}_{\gamma }^{\mathcal{F}}.$ Therefore, we require the
definition of a new inner product $\rho \left( T_{\gamma _{1},\mathbf{a}%
_{1}}^{(1)},T_{\gamma _{1},\mathbf{a}_{2}}^{(2)}\right) $ to equip $\mathcal{%
T}^{\mathcal{F}}$ with, in order to turn it into a Hilbert space, and then
using the induced norm, we will also have that it is a Banach space.

To that end, first we define a new map that will serve as the inner product
we need in order to study $\mathcal{T}^{\mathcal{F}}.$

\begin{lemma}
\label{InnerProductLemma}Consider $T_{\gamma _{1},\mathbf{a}%
_{1}}^{(1)},T_{\gamma _{1},\mathbf{a}_{2}}^{(2)}\in \mathcal{T}^{\mathcal{F}%
} $, and define the map $\rho :\mathcal{T}^{\mathcal{F}}\times \mathcal{T}^{%
\mathcal{F}}\rightarrow \Re ,$ by%
\begin{equation}
\rho \left( T_{\gamma _{1},\mathbf{a}_{1}}^{(1)},T_{\gamma _{2},\mathbf{a}%
_{2}}^{(2)}\right) (B)=\sum\limits_{n\in B}a_{n,1}a_{n,2}\frac{(\gamma
_{1}\gamma _{2})^{n}}{n!},  \label{InnerProduct}
\end{equation}%
where $\gamma _{1},\gamma _{2}\in \Re ,$ $\mathbf{a}%
_{k}=(a_{0,k},a_{1,k},a_{2,k},...),$ $a_{n,k}\in \Re ,$ $n\in \mathbb{N},$ $%
k=1,2,$ for any $B\in \mathcal{B}(\mathbb{N})$. Then, equipped with $\rho ,$
the space $\mathcal{T}^{\mathcal{F}}$ is an inner product space$.$
\end{lemma}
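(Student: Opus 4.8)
The plan is to verify the three defining properties of a (real) inner product — symmetry, bilinearity, and positive-definiteness — after first recasting the formula (\ref{InnerProduct}) in a form that does not reference the representing pair $(\gamma,\mathbf{a})$. Using (\ref{TaylorDensityatn}), namely $T_{\gamma,\mathbf{a}}(\{n\})=a_{n}\gamma^{n}/n!$, one checks that for any $S,T\in\mathcal{T}^{\mathcal{F}}$,
\begin{equation*}
\rho(S,T)(B)=\sum_{n\in B}n!\,S(\{n\})\,T(\{n\}).
\end{equation*}
This rewriting is the organizing step. Since the representation of a Taylor measure by $(\gamma,\mathbf{a})$ is not unique (as emphasized just after Conjecture \ref{ConjectureNumbers}), it first confirms that $\rho$ is well defined on $\mathcal{T}^{\mathcal{F}}\times\mathcal{T}^{\mathcal{F}}$, depending only on the measures and not on their chosen parameters.

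Symmetry, $\rho(S,T)=\rho(T,S)$, is then immediate from commutativity of multiplication in $\Re$. For bilinearity I would first record that $\mathcal{T}^{\mathcal{F}}$ is closed under the measure operations: $c\,T_{\gamma,\mathbf{a}}=T_{\gamma,c\mathbf{a}}$ handles scalar multiples, while for a sum $T^{(1)}+T^{(3)}$ of measures with parameters $(\gamma_{1},\mathbf{a}_{1})$ and $(\gamma_{3},\mathbf{a}_{3})$ one invokes Conjecture \ref{ConjectureNumbers} to write $a_{n,1}\gamma_{1}^{n}+a_{n,3}\gamma_{3}^{n}=u_{n}\delta^{n}$, exhibiting the sum as $T_{\delta,\mathbf{u}}\in\mathcal{T}^{\mathcal{F}}$. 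Additivity and homogeneity in the first argument then follow at once from the representation-free formula, because $(T^{(1)}+T^{(3)})(\{n\})=T^{(1)}(\{n\})+T^{(3)}(\{n\})$ and $(cT)(\{n\})=c\,T(\{n\})$; linearity in the second argument follows by symmetry.

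For positive-definiteness, evaluate on $B=\mathbb{N}$:
\begin{equation*}
\rho(T,T)(\mathbb{N})=\sum_{n=0}^{+\infty}n!\,T(\{n\})^{2}\geq 0,
\end{equation*}
with equality forcing $T(\{n\})=0$ for every $n\in\mathbb{N}$. Since $\mathbb{N}$ is countable, a signed measure on $(\mathbb{N},\mathcal{B}(\mathbb{N}))$ is determined by its values on singletons, so this says precisely that $T$ is the zero measure; hence $\rho(T,T)(\mathbb{N})=0$ iff $T=0$.

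The step I expect to be the real obstacle is not the algebra but the finiteness of $\rho$: the lemma declares $\rho$ to be $\Re$-valued, so one must guarantee $\rho(S,T)(\mathbb{N})<+\infty$. By the elementary Cauchy–Schwarz inequality $|\sum_{n}n!\,S(\{n\})T(\{n\})|\leq(\sum_{n}n!\,S(\{n\})^{2})^{1/2}(\sum_{n}n!\,T(\{n\})^{2})^{1/2}$, it suffices that $\rho(T,T)(\mathbb{N})=\sum_{n}a_{n}^{2}\gamma^{2n}/n!<+\infty$ for each $T$. This, however, does not follow from membership in $\mathcal{T}^{\mathcal{F}}$ alone: for $\gamma=1$ and $a_{n}=n!/2^{n}$ one has $T\in\mathcal{T}^{\mathcal{F}}$ (since $\sum_{n}|a_{n}|/n!=\sum_{n}2^{-n}<\infty$) yet $\sum_{n}n!\,T(\{n\})^{2}=\sum_{n}n!/4^{n}=+\infty$. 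The clean remedy is to carry out the proof on the subspace $\{T\in\mathcal{T}^{\mathcal{F}}:\sum_{n}a_{n}^{2}\gamma^{2n}/n!<+\infty\}$, on which Cauchy–Schwarz makes $\rho$ finite-valued and the verification above goes through verbatim; I would flag explicitly that some such square-summability restriction is what legitimately turns $(\mathcal{T}^{\mathcal{F}},\rho)$ into an inner product space.
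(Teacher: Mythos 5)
Your proposal is correct, and it takes a genuinely different (and stronger) route than the paper's proof. The paper verifies symmetry, positive-definiteness and linearity directly on the parametrized formula (\ref{InnerProduct}), invoking Conjecture \ref{ConjectureNumbers} to rewrite $aa_{n,1}\gamma_{1}^{n}+ba_{n,2}\gamma_{2}^{n}=u_{n}^{(1,2)}\gamma_{1,2}^{n}$ in the linearity step; it never asks whether $\rho$ depends on the choice of $(\gamma,\mathbf{a})$ --- a real issue, since the Poisson--Taylor example in the paper shows the parametrization is not unique --- nor whether the defining series converges. Your representation-free rewriting $\rho(S,T)(B)=\sum_{n\in B}n!\,S(\{n\})T(\{n\})$ settles well-definedness at once and reduces bilinearity to additivity of signed measures on singletons, bypassing the Conjecture-based algebra (whose displayed computation in the paper in fact carries a spurious extra factor $a_{n,3}$). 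More importantly, your finiteness objection is genuine and exposes a flaw the paper's proof silently passes over: with $\gamma=1$ and $a_{n}=n!/2^{n}$ one has $T_{\gamma,\mathbf{a}}(\mathbb{N})=\sum_{n}2^{-n}<+\infty$, so $T_{\gamma,\mathbf{a}}\in\mathcal{T}^{\mathcal{F}}$, yet $\rho\left(T_{\gamma,\mathbf{a}},T_{\gamma,\mathbf{a}}\right)(\mathbb{N})=\sum_{n}n!/4^{n}=+\infty$; with alternating signs the cross series need not even converge in the extended reals. So the lemma as stated is false on all of $\mathcal{T}^{\mathcal{F}}$, and your remedy --- restricting to the weighted $\ell^{2}$ subspace $\{T:\sum_{n}n!\,T(\{n\})^{2}<+\infty\}$ --- is the right one; note that by Cauchy--Schwarz against $\sum_{n}1/n!<+\infty$ this subspace automatically consists of finite measures, so it does sit inside $\mathcal{T}^{\mathcal{F}}$, and it is closed under sums, so your verification goes through on it. The one residual imprecision, which you share with the paper, is that $\rho(S,T)$ as written is a set function of $B$ rather than a single real number; your implicit convention of evaluating at $B=\mathbb{N}$ is the sensible reading and should be stated explicitly as part of the definition of the inner product.
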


\begin{proof}
Let $T_{\gamma _{1},\mathbf{a}_{1}}^{(1)},T_{\gamma _{2},\mathbf{a}%
_{2}}^{(2)},T_{\gamma _{3},\mathbf{a}_{3}}^{(3)}\in \mathcal{T}^{\mathcal{F}%
},$ and take arbitrary $B\in \mathcal{B}(\mathbb{N}).$ First note that $\rho 
$ is symmetric since%
\begin{eqnarray*}
\rho \left( T_{\gamma _{1},\mathbf{a}_{1}}^{(1)},T_{\gamma _{2},\mathbf{a}%
_{2}}^{(2)}\right) (B) &=&\sum\limits_{n\in B}a_{n,1}a_{n,2}\frac{(\gamma
_{1}\gamma _{2})^{n}}{n!}=\sum\limits_{n\in B}a_{n,2}a_{n,1}\frac{(\gamma
_{2}\gamma _{1})^{n}}{n!} \\
&=&\rho \left( T_{\gamma _{2},\mathbf{a}_{2}}^{(2)},T_{\gamma _{1},\mathbf{a}%
_{1}}^{(1)}\right) (B),
\end{eqnarray*}%
and positive definite for non-zero $T_{\gamma _{1},\mathbf{a}_{1}}^{(1)},$
since%
\begin{equation*}
\rho \left( T_{\gamma _{1},\mathbf{a}_{1}}^{(1)},T_{\gamma _{1},\mathbf{a}%
_{1}}^{(1)}\right) (B)=\sum\limits_{n\in B}a_{n,1}^{2}\frac{\gamma _{1}^{2n}%
}{n!}>0.
\end{equation*}%
Moreover, for arbitrary $a,b\in \Re ,$ using (\ref{Conjecture}) we write%
\begin{equation*}
aa_{n,1}\gamma _{1}^{n}+ba_{n,2}\gamma _{2}^{n}=u_{n}^{(1,2)}\left( \gamma
_{1,2}\right) ^{n},
\end{equation*}%
for some $u_{n}^{(1,2)},\gamma _{1,2}\in \Re ,$ so that%
\begin{equation}
aT_{\gamma _{1},\mathbf{a}_{1}}^{(1)}(B)+bT_{\gamma _{2},\mathbf{a}%
_{2}}^{(2)}(B)=\sum\limits_{n\in B}(aa_{n,1}\gamma _{1}^{n}+ba_{n,2}\gamma
_{2}^{n})\frac{1}{n!}=\sum\limits_{n\in B}u_{n}^{(1,2)}\frac{\gamma
_{1,2}^{n}}{n!}.  \label{LinearForm}
\end{equation}%
As a result, we have linearity in the first argument since%
\begin{eqnarray*}
&&a\rho \left( T_{\gamma _{1},\mathbf{a}_{1}}^{(1)},T_{\gamma _{3},\mathbf{a}%
_{2}}^{(3)}\right) (B)+b\rho \left( T_{\gamma _{2},\mathbf{a}%
_{2}}^{(2)},T_{\gamma _{3},\mathbf{a}_{3}}^{(3)}\right) (B) \\
&=&\sum\limits_{n\in B}\left( aa_{n,1}a_{n,3}(\gamma _{1}\gamma
_{3})^{n}+ba_{n,2}a_{n,3}(\gamma _{2}\gamma _{3})^{n}\right) \frac{1}{n!} \\
&=&\sum\limits_{n\in B}\left( aa_{n,1}a_{n,3}\gamma
_{1}^{n}+ba_{n,2}a_{n,3}\gamma _{2}^{n}\right) a_{n,3}\frac{\gamma _{3}^{n}}{%
n!}=\sum\limits_{n\in B}u_{n}^{(1,2)}a_{n,3}\frac{(\gamma _{1,2}\gamma
_{3})^{n}}{n!} \\
&=&\rho \left( aT_{\gamma _{1},\mathbf{a}_{1}}^{(1)}+bT_{\gamma _{2},\mathbf{%
a}_{2}}^{(2)},T_{\gamma _{3},\mathbf{a}_{3}}^{(3)}\right) (B),
\end{eqnarray*}%
so that $\rho (.,.)$ defines an inner product in $\mathcal{T}^{\mathcal{F}}$.
\end{proof}

Next we prove linearity of $\mathcal{T}^{\mathcal{F}}$, which is known for
signed measure spaces, but the proof is presented here explicity.

\begin{lemma}
\label{LemmaLinearity}The space of finite Taylor measures $\mathcal{T}^{%
\mathcal{F}}$ is a linear (vector) space.
\end{lemma}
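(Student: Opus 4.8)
The plan is to observe that $\mathcal{T}^{\mathcal{F}}$ is a subset of the vector space of all real-valued set functions on $\mathcal{B}(\mathbb{N})$, with addition and scalar multiplication understood pointwise on each $B\in\mathcal{B}(\mathbb{N})$. Since the vector-space axioms (commutativity and associativity of addition, distributivity, and so on) are inherited automatically from the arithmetic of $\Re$ under these pointwise operations, it suffices to verify three things: that $\mathcal{T}^{\mathcal{F}}$ contains a zero element, that it is closed under scalar multiplication, and that it is closed under addition. The zero element is immediate, since taking $\mathbf{a}=\mathbf{0}$ yields $T_{\gamma,\mathbf{0}}(B)=0$ for every $B$, and this measure lies in $\mathcal{T}^{\mathcal{F}}$ for any $\gamma\in\Re$.

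Closure under scalar multiplication is the routine step. For $T_{\gamma,\mathbf{a}}\in\mathcal{T}^{\mathcal{F}}$ and $c\in\Re$, I would simply compute $(cT_{\gamma,\mathbf{a}})(B)=\sum_{n\in B}(ca_{n})\gamma^{n}/n!=T_{\gamma,c\mathbf{a}}(B)$, where $c\mathbf{a}=(ca_{0},ca_{1},\dots)$; thus $cT_{\gamma,\mathbf{a}}$ is again a Taylor measure with the same base $\gamma$ and rescaled coefficients, and finiteness is preserved because $\left\Vert cT_{\gamma,\mathbf{a}}\right\Vert(\mathbb{N})=|c|\,\left\Vert T_{\gamma,\mathbf{a}}\right\Vert(\mathbb{N})<+\infty$.

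Closure under addition is the heart of the matter, and the essential computation has already been carried out in the proof of Lemma \ref{InnerProductLemma}. Given $T_{\gamma_{1},\mathbf{a}_{1}}^{(1)},T_{\gamma_{2},\mathbf{a}_{2}}^{(2)}\in\mathcal{T}^{\mathcal{F}}$, specializing equation (\ref{LinearForm}) to $a=b=1$ gives
\begin{equation*}
T_{\gamma_{1},\mathbf{a}_{1}}^{(1)}(B)+T_{\gamma_{2},\mathbf{a}_{2}}^{(2)}(B)=\sum_{n\in B}\left(a_{n,1}\gamma_{1}^{n}+a_{n,2}\gamma_{2}^{n}\right)\frac{1}{n!}=\sum_{n\in B}u_{n}^{(1,2)}\frac{\gamma_{1,2}^{n}}{n!},
\end{equation*}
where the existence of a common base $\gamma_{1,2}\in\Re$ and a coefficient sequence $\mathbf{u}^{(1,2)}$ realizing this identity for all $n$ is guaranteed by Conjecture \ref{ConjectureNumbers}. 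Hence the sum equals $T_{\gamma_{1,2},\mathbf{u}^{(1,2)}}$, which is once more of Taylor form.

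Finally I would confirm that the sum remains \emph{finite}, so that it lands in $\mathcal{T}^{\mathcal{F}}$ rather than merely the larger class of $\sigma$-finite Taylor measures; this follows from the triangle inequality for total variation, $\left\Vert T_{\gamma_{1},\mathbf{a}_{1}}^{(1)}+T_{\gamma_{2},\mathbf{a}_{2}}^{(2)}\right\Vert(\mathbb{N})\leq\left\Vert T_{\gamma_{1},\mathbf{a}_{1}}^{(1)}\right\Vert(\mathbb{N})+\left\Vert T_{\gamma_{2},\mathbf{a}_{2}}^{(2)}\right\Vert(\mathbb{N})<+\infty$. The main obstacle is precisely this representability step: one must be certain that the sum of two Taylor measures built on distinct bases $\gamma_{1},\gamma_{2}$ can itself be expressed in the canonical single-base form $\sum_{n}u_{n}\gamma^{n}/n!$, and it is Conjecture \ref{ConjectureNumbers} that supplies the required common $\gamma$. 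With that secured, the presence of the zero measure together with closure under both operations establishes that $\mathcal{T}^{\mathcal{F}}$ is a linear subspace, hence a vector space.
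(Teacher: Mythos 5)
Your proposal is correct and takes essentially the same route as the paper: the decisive step in both is equation (\ref{LinearForm}) (and hence Conjecture \ref{ConjectureNumbers}), which rewrites a combination of Taylor measures on distinct bases $\gamma_{1},\gamma_{2}$ in single-base Taylor form. Your extra bookkeeping---the zero element, the identity $cT_{\gamma ,\mathbf{a}}=T_{\gamma ,c\mathbf{a}}$, and the total-variation check that the sum remains finite so it lands in $\mathcal{T}^{\mathcal{F}}$---merely makes explicit what the paper's one-line proof leaves implicit.
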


\begin{proof}
Take arbitrary $a,b\in \Re $ and let $T_{\gamma _{1},\mathbf{a}%
_{1}}^{(1)},T_{\gamma _{2},\mathbf{a}_{2}}^{(2)}\in \mathcal{T}^{\mathcal{F}%
} $. From the previous proof, using equation (\ref{LinearForm}) we have
trivially%
\begin{equation*}
aT_{\gamma _{1},\mathbf{a}_{1}}^{(1)}(B)+bT_{\gamma _{2},\mathbf{a}%
_{2}}^{(2)}(B)=\sum\limits_{n\in B}u_{n}^{(1,2)}\frac{\gamma _{1,2}^{n}}{n!}%
\in \mathcal{T}^{\mathcal{F}},
\end{equation*}%
for any $B\in \mathcal{B}(\mathbb{N}).$
\end{proof}

Based on the inner product of equation (\ref{InnerProduct}), we can
immediately equip $\mathcal{T}^{\mathcal{F}}$ with the induced norm%
\begin{equation}
\left\Vert T_{\gamma ,\mathbf{a}}\right\Vert _{\rho }=\sqrt{\rho \left(
T_{\gamma ,\mathbf{a}},T_{\gamma ,\mathbf{a}}\right) },  \label{TaylorNorm}
\end{equation}%
and distance%
\begin{equation}
d\left( T_{\gamma _{1},\mathbf{a}_{1}}^{(1)},T_{\gamma _{2},\mathbf{a}%
_{2}}^{(2)}\right) =\left\Vert T_{\gamma _{1},\mathbf{a}_{1}}^{(1)}-T_{%
\gamma _{2},\mathbf{a}_{2}}^{(2)}\right\Vert _{\rho }=\sqrt{\rho \left(
T_{\gamma _{1},\mathbf{a}_{1}}^{(1)}-T_{\gamma _{2},\mathbf{a}%
_{2}}^{(2)},T_{\gamma _{1},\mathbf{a}_{1}}^{(1)}-T_{\gamma _{2},\mathbf{a}%
_{2}}^{(2)}\right) }.  \label{TaylorDistance}
\end{equation}

Since $\left\Vert T_{\gamma ,\mathbf{a}}\right\Vert _{\rho }$ and $d\left(
T_{\gamma _{1},\mathbf{a}_{1}}^{(1)},T_{\gamma _{2},\mathbf{a}%
_{2}}^{(2)}\right) $ are defined based on an inner product, they are, by
definition, a norm and metric, respectively, so that $\mathcal{T}^{\mathcal{F%
}}$ becomes immediately a normed vector space and $\left( \mathcal{T}^{%
\mathcal{F}},d\right) $ is a metric space. Therefore, we have all the
important ingredients required for a Hilbert space, except for completeness,
which we collect next.

\begin{lemma}
The space of finite Taylor measures $\mathcal{T}^{\mathcal{F}},$ equipped
with the norm $\left\Vert .\right\Vert _{\rho }$, is complete.
\end{lemma}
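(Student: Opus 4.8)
The plan is to identify $(\mathcal{T}^{\mathcal{F}},\left\Vert \cdot \right\Vert _{\rho })$ with the classical sequence space $\ell ^{2}(\mathbb{N})$ through an explicit isometric isomorphism, and then deduce completeness from that of $\ell ^{2}$. First I would record that, evaluating the inner product (\ref{InnerProduct}) at $B=\mathbb{N}$, the induced norm is
\[
\left\Vert T_{\gamma ,\mathbf{a}}\right\Vert _{\rho }^{2}=\rho \left( T_{\gamma ,\mathbf{a}},T_{\gamma ,\mathbf{a}}\right) (\mathbb{N})=\sum_{n=0}^{+\infty }a_{n}^{2}\frac{\gamma ^{2n}}{n!}=\sum_{n=0}^{+\infty }\left( \frac{a_{n}\gamma ^{n}}{\sqrt{n!}}\right) ^{2}.
\]
Since $T_{\gamma ,\mathbf{a}}(\{n\})=a_{n}\gamma ^{n}/n!$, the quantity $a_{n}\gamma ^{n}/\sqrt{n!}=\sqrt{n!}\,T_{\gamma ,\mathbf{a}}(\{n\})$ depends only on the measure and not on the chosen representation $(\gamma ,\mathbf{a})$; this both confirms that $\rho $ is well defined on measures and singles out the coordinate map $\Phi (T_{\gamma ,\mathbf{a}})=\bigl( \sqrt{n!}\,T_{\gamma ,\mathbf{a}}(\{n\})\bigr) _{n\geq 0}$ as the natural candidate for the isometry.

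Second, I would verify that $\Phi $ is a linear isometry of $\mathcal{T}^{\mathcal{F}}$ into $\ell ^{2}$. Linearity is immediate from Lemma \ref{LemmaLinearity} together with the additive form (\ref{LinearForm}), which shows the point masses of $aT^{(1)}+bT^{(2)}$ to be $a\,T^{(1)}(\{n\})+b\,T^{(2)}(\{n\})$. Inner-product preservation is the direct computation
\[
\bigl\langle \Phi (T^{(1)}),\Phi (T^{(2)})\bigr\rangle _{\ell ^{2}}=\sum_{n=0}^{+\infty }\frac{a_{n,1}\gamma _{1}^{n}}{\sqrt{n!}}\,\frac{a_{n,2}\gamma _{2}^{n}}{\sqrt{n!}}=\sum_{n=0}^{+\infty }a_{n,1}a_{n,2}\frac{(\gamma _{1}\gamma _{2})^{n}}{n!}=\rho \left( T^{(1)},T^{(2)}\right) (\mathbb{N}),
\]
and injectivity follows because $\Phi (T)=0$ forces every point mass to vanish, hence $T=0$.

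Third, I would prove surjectivity onto $\ell ^{2}$, which is where the non-uniqueness of the representation noted after Conjecture \ref{ConjectureNumbers} pays off: given $c=(c_{n})\in \ell ^{2}$, set $\gamma =1$ and $a_{n}=\sqrt{n!}\,c_{n}$, so that $\Phi (T_{1,\mathbf{a}})=c$ by construction. The resulting object lies in $\mathcal{T}^{\mathcal{F}}$ because its total variation is finite; by Cauchy--Schwarz,
\[
\sum_{n=0}^{+\infty }\left\vert T_{1,\mathbf{a}}(\{n\})\right\vert =\sum_{n=0}^{+\infty }\frac{\left\vert c_{n}\right\vert }{\sqrt{n!}}\leq \left\Vert c\right\Vert _{\ell ^{2}}\Bigl( \sum_{n=0}^{+\infty }\frac{1}{n!}\Bigr) ^{1/2}=\sqrt{e}\,\left\Vert c\right\Vert _{\ell ^{2}}<+\infty .
\]
Thus $\Phi $ is an isometric isomorphism of $\mathcal{T}^{\mathcal{F}}$ onto $\ell ^{2}$; since $\ell ^{2}$ is complete and completeness is preserved by surjective isometries, $\mathcal{T}^{\mathcal{F}}$ is complete. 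Concretely, any $\rho $-Cauchy sequence $\{T^{(k)}\}$ is carried by $\Phi $ to a Cauchy sequence in $\ell ^{2}$, whose limit $c$ is pulled back by $\Phi ^{-1}$ to the limiting Taylor measure.

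Finally, I expect the main obstacle to be the surjectivity/closure step rather than the isometry bookkeeping: one must check that the candidate limit really is an admissible finite Taylor measure, i.e.\ that an $\ell ^{2}$ sequence corresponds to some legitimate pair $(\gamma ,\mathbf{a})$ with $T_{\gamma ,\mathbf{a}}(\mathbb{N})<+\infty $. The two facts that make this work are that representations are not unique, so one may always normalize to $\gamma =1$, and that the weight $1/\sqrt{n!}$ is square-summable (summing to $\sqrt{e}$), so finite $\rho $-norm forces finite total variation via Cauchy--Schwarz. A reader preferring to avoid the $\ell ^{2}$ identification could instead argue directly: each coordinate $\sqrt{n!}\,T^{(k)}(\{n\})$ is Cauchy in $\Re $ because a single coordinate is dominated by the full norm $\left\Vert \cdot \right\Vert _{\rho }$, giving pointwise limits $c_{n}$, after which a truncation-and-Fatou estimate upgrades pointwise convergence to $\left\Vert T^{(k)}-T\right\Vert _{\rho }\rightarrow 0$; this is exactly the standard proof that $\ell ^{2}$ is complete, repackaged.
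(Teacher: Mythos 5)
Your proof takes a genuinely different route from the paper's. The paper argues directly on a Cauchy sequence $v_{k}=T_{\gamma _{k},\mathbf{a}_{k}}^{(k)}$: it asserts that Cauchyness forces the parameters to converge ($a_{n,k}\rightarrow a_{n}$, $\gamma _{k}\rightarrow \gamma $), defines the limit setwise, and swaps limit and sum by bounded convergence to conclude the limit is again a finite Taylor measure. You instead transport the problem to $\ell ^{2}$ via the coordinate map $\Phi (T)=\bigl(\sqrt{n!}\,T(\{n\})\bigr)_{n}$ and quote completeness of $\ell ^{2}$. This buys two genuine improvements. First, the coordinates $T(\{n\})$ depend only on the measure and not on the representation $(\gamma ,\mathbf{a})$; the paper's step that $a_{n,k}\rightarrow a_{n}$ and $\gamma _{k}\rightarrow \gamma $ ``otherwise $v_{k}$ would not be Cauchy'' is not justified, precisely because representations are not unique (the paper's own Poisson--Taylor example exhibits one measure with three distinct parameterizations), so norm-Cauchyness cannot force parameter convergence. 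Your observation that $\rho $ itself depends only on the measures also quietly repairs an unaddressed well-definedness issue in Lemma \ref{InnerProductLemma}. Second, identifying the space with $\ell ^{2}$ would give separability, i.e.\ Theorem \ref{TaylorPolish}, for free.

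There is, however, one genuine gap in your isomorphism claim: $\Phi $ does not map $\mathcal{T}^{\mathcal{F}}$ into $\ell ^{2}$. Membership in $\mathcal{T}^{\mathcal{F}}$ as defined in (\ref{FiniteTaylorMeasureSpace}) requires only $T_{\gamma ,\mathbf{a}}(\mathbb{N})=\sum_{n}a_{n}\gamma ^{n}/n!<+\infty $, which does not imply $\sum_{n}a_{n}^{2}\gamma ^{2n}/n!<+\infty $: take $\gamma =1$ and $a_{n}=n!/n^{2}$, so that $T(\mathbb{N})=\sum_{n}1/n^{2}<+\infty $ while $\left\Vert T\right\Vert _{\rho }^{2}=\sum_{n}n!/n^{4}=+\infty $. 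Hence $\Phi (\mathcal{T}^{\mathcal{F}})$ is the weighted space $\{d:\sum_{n}|d_{n}|/\sqrt{n!}<+\infty \}$, which strictly contains $\ell ^{2}$, and what you have actually proved is that the subspace of finite-$\rho $-norm measures is isometric to $\ell ^{2}$, hence complete. In fairness, this defect is inherited from the paper itself: $\rho $ is not finite-valued on all of $\mathcal{T}^{\mathcal{F}}$, so the paper's inner-product lemma, its Hilbert-space theorem, and the present lemma already presuppose finiteness of $\left\Vert \cdot \right\Vert _{\rho }$; under that implicit restriction your argument is correct and more rigorous than the paper's. It also patches easily to the unrestricted space: in any Cauchy sequence the differences $v_{k}-v_{N}$ lie in $\mathcal{T}^{\mathcal{F}}$ by Lemma \ref{LemmaLinearity} and have finite norm, so their $\ell ^{2}$ limit pulls back (by your surjectivity construction) to some $w\in \mathcal{T}^{\mathcal{F}}$, and then $v_{k}\rightarrow v_{N}+w\in \mathcal{T}^{\mathcal{F}}$.
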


\begin{proof}
Assume that the sequence of measures $v_{k}=T_{\gamma _{k},\mathbf{a}%
_{k}}^{(k)}\in \mathcal{T}^{\mathcal{F}},$ is Cauchy, and take an arbitrary $%
B\in \mathcal{B}(\mathbb{N}),$ with%
\begin{equation*}
v_{k}=\sum\limits_{n\in B}a_{n,k}\frac{\gamma _{k}^{n}}{n!},
\end{equation*}%
where we must have%
\begin{equation*}
\underset{k\rightarrow +\infty }{\lim }a_{n,k}=a_{n},
\end{equation*}%
and%
\begin{equation*}
\underset{k\rightarrow +\infty }{\lim }\gamma _{k}=\gamma ,
\end{equation*}
otherwise $v_{k}$ would not be Cauchy, i.e., it has to converge to a single
value. Then, by the Cauchy sequence definition, $\forall \varepsilon >0,$ $%
\exists N>0,$ such that, $\forall k,m>N,$ we have%
\begin{equation*}
\left\Vert v_{k}-v_{m}\right\Vert _{\rho }(B)<\varepsilon .
\end{equation*}%
We need to show that $v_{k},$ $k\in \mathbb{N},$ converges to an element of $%
\mathcal{T}^{\mathcal{F}}.$ Define $v(B)=\underset{k\rightarrow +\infty }{%
\lim }v_{k}(B),$ and write%
\begin{eqnarray*}
\left\Vert v_{m}-v\right\Vert _{\rho }(B) &=&\sqrt{\rho \left(
v_{m},v\right) (B)}=\sqrt{\rho \left( v_{m},\underset{k\rightarrow +\infty }{%
\lim }v_{k}\right) (B)}=\underset{k\rightarrow +\infty }{\lim }\sqrt{\rho
\left( v_{m},v_{k}\right) (B)} \\
&=&\underset{k\rightarrow +\infty }{\lim }\left\Vert v_{m}-v_{k}\right\Vert
_{\rho }(B)\leq \varepsilon ,
\end{eqnarray*}%
so that $v_{k}$ converges to $v$, and it remains to show that $v\in \mathcal{%
T}^{\mathcal{F}}.$ In particular, we have 
\begin{equation*}
v(B)=\underset{k\rightarrow +\infty }{\lim }v_{k}(B)=\underset{k\rightarrow
+\infty }{\lim }\sum\limits_{n\in B}a_{n,k}\frac{\gamma _{k}^{n}}{n!}%
=\sum\limits_{n\in B}\underset{k\rightarrow +\infty }{\lim }\left(
a_{n,k}\gamma _{k}^{n}\right) \frac{1}{n!}=\sum\limits_{n\in B}a_{n}\frac{%
\gamma ^{n}}{n!}\in \mathcal{T}^{\mathcal{F}},
\end{equation*}%
for any $B\in \mathcal{B}(\mathbb{N}),$ where we can swap the order of the
limit and summation signs via an appeal to the bounded convergence theorem.
\end{proof}

Combining all the results up to this point in this section, gives the
following important result for the space of finite Taylor measures.

\begin{theorem}[Hilbert space]
The space of finite Taylor measures $\mathcal{T}^{\mathcal{F}},$ equipped
with the inner product $\rho \left( .,.\right) ,$ is a Hilbert space.
\end{theorem}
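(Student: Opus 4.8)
The plan is simply to assemble the three structural facts already established in this section, since a Hilbert space is by definition a linear space carrying an inner product whose induced norm is complete. Thus I would verify these three ingredients in turn and then invoke the definition to conclude.

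First I would recall Lemma~\ref{LemmaLinearity}, which shows that $\mathcal{T}^{\mathcal{F}}$ is a linear (vector) space, providing the algebraic scaffolding of addition and scalar multiplication. Next I would cite Lemma~\ref{InnerProductLemma}, which establishes that the map $\rho$ of (\ref{InnerProduct}) is a genuine inner product on $\mathcal{T}^{\mathcal{F}}$; consequently $(\mathcal{T}^{\mathcal{F}},\rho)$ is an inner product space, and the norm $\left\Vert \cdot\right\Vert_{\rho}$ of (\ref{TaylorNorm}) together with the metric $d$ of (\ref{TaylorDistance}) are precisely the norm and metric canonically induced by $\rho$. In particular, because this norm arises from an inner product, the parallelogram identity holds automatically; this is the crucial point distinguishing $\left\Vert \cdot\right\Vert_{\rho}$ from the total variation norm, which was seen above to fail it.

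Finally I would appeal to the preceding completeness lemma, which guarantees that every $\left\Vert \cdot\right\Vert_{\rho}$-Cauchy sequence in $\mathcal{T}^{\mathcal{F}}$ converges to an element of $\mathcal{T}^{\mathcal{F}}$. With a linear structure, a compatible inner product, and completeness of the induced norm all in hand, the defining properties of a Hilbert space are met, and the theorem follows at once.

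I do not expect any genuine obstacle at this stage: the substantive work---verifying the inner-product axioms (which relied on Conjecture~\ref{ConjectureNumbers} to keep linear combinations inside $\mathcal{T}^{\mathcal{F}}$) and establishing completeness (via the bounded convergence theorem)---has already been carried out in the lemmas above. The present theorem is therefore a bookkeeping synthesis, and the only point to confirm is that these three lemmas supply exactly the three clauses in the definition of a Hilbert space, which is immediate.
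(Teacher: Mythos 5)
Your proposal matches the paper exactly: the paper offers no separate argument for this theorem, stating only that it follows by ``combining all the results up to this point in this section,'' i.e.\ Lemma~\ref{LemmaLinearity} (linearity), Lemma~\ref{InnerProductLemma} (the inner product $\rho$), and the completeness lemma, which is precisely your synthesis. Your additional remark that the induced norm automatically satisfies the parallelogram identity---unlike the total variation norm---is consistent with the paper's own motivation for introducing $\rho$ in the first place.
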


This highly desirable property for $\mathcal{T}^{\mathcal{F}}$ allows us to
borrow strength from all the general results on Hilbert spaces in the
literature. For example, $\mathcal{T}^{\mathcal{F}}$ equipped with the norm $%
\left\Vert .\right\Vert _{\rho }$ is a Banach space, and following \cite%
{Dudley2004}, Theorems 5.4.7, 5.4.9 and Corollary 5.4.10, every Hilbert
space has an orthonormal basis. In particular, let $\mathcal{E}=\{e_{\gamma
_{i},\mathbf{a}_{i}}\}_{i\in I}$ denote an orthonormal basis of $\mathcal{T}%
^{\mathcal{F}},$ where $I$ is not necessarily countable, so that for any $%
T_{\gamma ,\mathbf{a}}\in \mathcal{T}^{\mathcal{F}},$ we have the
reproducing formula%
\begin{equation}
T_{\gamma ,\mathbf{a}}(B)=\sum_{i\in I}\rho \left( T_{\gamma ,\mathbf{a}%
},e_{\gamma _{i},\mathbf{a}_{i}}\right) (B)e_{\gamma _{i},\mathbf{a}_{i}}(B),
\label{ReproducingHilbertTmeasure}
\end{equation}%
for any $B\in \mathcal{B}(\mathbb{N}).$ Clearly, since $\mathcal{E}$ is not
unique, the latter representation of a signed Taylor measure is not unique.

There is one property that is not immediately acquired in a Hilbert space,
that of separability. If we can further show that there is a countable dense
subset of $\mathcal{T}^{\mathcal{F}}$, then $\mathcal{T}^{\mathcal{F}}$ will
be separable, and as a consequence, a Polish space (complete, separable,
metric space). We collect this result in the following.

\begin{theorem}[Polish Space]
\label{TaylorPolish}The Hilbert space of finite Taylor measures $\mathcal{T}%
^{\mathcal{F}},$ equipped with the induced norm $\left\Vert .\right\Vert
_{\rho },$ is a Polish space.
\end{theorem}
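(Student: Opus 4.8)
The plan is to lean entirely on the three results already assembled in this section. By the completeness Lemma, $\mathcal{T}^{\mathcal{F}}$ is complete in $\left\Vert .\right\Vert _{\rho }$, and being an inner-product space it is automatically a metric space under the distance $d$ of \eqref{TaylorDistance}. Hence the only ingredient still missing for the Polish property is separability, and the whole task reduces to exhibiting a countable dense subset $\mathcal{D}\subseteq \mathcal{T}^{\mathcal{F}}$ — precisely the gap flagged in the paragraph preceding the theorem.

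For the candidate set I would take the finitely supported rational Taylor measures: fix $\gamma =1$ and let
\begin{equation*}
\mathcal{D}=\left\{ T_{1,\mathbf{r}}\in \mathcal{T}^{\mathcal{F}}:\mathbf{r}=(r_{0},\dots ,r_{N},0,0,\dots ),\ r_{n}\in \mathbb{Q},\ N\in \mathbb{N}\right\} .
\end{equation*}
Each such $T_{1,\mathbf{r}}$ is a finite sum and so trivially lies in $\mathcal{T}^{\mathcal{F}}$ with finite $\rho $-norm, while $\mathcal{D}$ is a countable union over $N$ of sets in bijection with $\mathbb{Q}^{N+1}$, hence countable. The key computational observation is that $\rho $ acts on singletons as $\rho \left( T_{\gamma ,\mathbf{a}},T_{\gamma ,\mathbf{a}}\right) (\{n\})=a_{n}^{2}\gamma ^{2n}/n!=[T_{\gamma ,\mathbf{a}}(\{n\})]^{2}\,n!$, which gives the representation-independent form $\left\Vert T_{\gamma ,\mathbf{a}}\right\Vert _{\rho }^{2}=\sum_{n\in \mathbb{N}}[T_{\gamma ,\mathbf{a}}(\{n\})]^{2}\,n!$. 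Together with the linearity of $\mathcal{T}^{\mathcal{F}}$ (Lemma \ref{LemmaLinearity}), this yields, for any $T_{\gamma ,\mathbf{a}}$ and any $T_{1,\mathbf{r}}\in \mathcal{D}$,
\begin{equation*}
d\!\left( T_{\gamma ,\mathbf{a}},T_{1,\mathbf{r}}\right) ^{2}=\left\Vert T_{\gamma ,\mathbf{a}}-T_{1,\mathbf{r}}\right\Vert _{\rho }^{2}=\sum_{n=0}^{N}\frac{(a_{n}\gamma ^{n}-r_{n})^{2}}{n!}+\sum_{n>N}\frac{(a_{n}\gamma ^{n})^{2}}{n!}.
\end{equation*}

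To prove density, I would fix $T_{\gamma ,\mathbf{a}}\in \mathcal{T}^{\mathcal{F}}$ and $\varepsilon >0$. Since $T_{\gamma ,\mathbf{a}}$ is a member of the Hilbert space it has finite $\rho $-norm, i.e. $\sum_{n}(a_{n}\gamma ^{n})^{2}/n!<+\infty $, so I can first choose $N$ large enough that the tail $\sum_{n>N}(a_{n}\gamma ^{n})^{2}/n!<\varepsilon ^{2}/2$. Then, handling the finitely many numbers $a_{0}\gamma ^{0},\dots ,a_{N}\gamma ^{N}$ one at a time, I use density of $\mathbb{Q}$ in $\Re $ to pick rationals $r_{n}$ with $(a_{n}\gamma ^{n}-r_{n})^{2}/n!<\frac{\varepsilon ^{2}}{2(N+1)}$, so the head sum is also $<\varepsilon ^{2}/2$. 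The displayed identity then gives $d(T_{\gamma ,\mathbf{a}},T_{1,\mathbf{r}})<\varepsilon $, establishing that $\mathcal{D}$ is dense; combined with the already-proved completeness and metric structure, $\mathcal{T}^{\mathcal{F}}$ is Polish. I expect the main obstacle to be the tail estimate, which rests on every element of the Hilbert space having \emph{finite $\rho $-norm}, equivalently $\sum_{n}(a_{n}\gamma ^{n})^{2}/n!<+\infty $; I would be careful to use this convergence, rather than mere finiteness of $T_{\gamma ,\mathbf{a}}(\mathbb{N})$, since it is what forces the tail to vanish. An equivalent but slicker route is to observe that $T_{\gamma ,\mathbf{a}}\mapsto (a_{n}\gamma ^{n}/\sqrt{n!})_{n\in \mathbb{N}}$ is a linear isometry of $\mathcal{T}^{\mathcal{F}}$ into $\ell ^{2}$, so that separability is inherited from that of $\ell ^{2}$.
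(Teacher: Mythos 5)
Your proof is correct and takes a genuinely different route from the paper's. The paper's candidate dense set is $\mathcal{C}=\{T_{s,\mathbf{q}}\}$, the collection of \emph{all} Taylor measures with $s\in \mathbb{Q}$ and $\mathbf{q}\in \mathbb{Q}^{\infty }$, and its density argument shows only \emph{setwise} convergence, $T_{s_{k},\mathbf{q}_{k}}(B)\rightarrow T_{\gamma ,\mathbf{a}}(B)$ for each fixed $B\in \mathcal{B}(\mathbb{N})$, after picking rational sequences $s_{k}\rightarrow \gamma $ and $q_{n,k}\rightarrow a_{n}$. Your construction differs on both counts, and each difference buys something real. First, your $\mathcal{D}$ of \emph{finitely supported} rational sequences is genuinely countable, whereas $\mathbb{Q}^{\infty }$ (all infinite rational sequences) has the cardinality of the continuum, so the paper's $\mathcal{C}$ is not countable as claimed; truncation is precisely the device that repairs this. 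Second, you prove density in the metric of the theorem, via the head/tail estimate on $\left\Vert T_{\gamma ,\mathbf{a}}-T_{1,\mathbf{r}}\right\Vert _{\rho }$, rather than setwise convergence, which is what density in a metric space actually requires (the paper never relates its limit to the $\rho $-norm topology). Your representation-independent identity $\left\Vert T\right\Vert _{\rho }^{2}=\sum_{n}[T(\{n\})]^{2}\,n!$ and the induced isometry into $\ell ^{2}$ make the mechanism transparent, and your caution about needing $\sum_{n}(a_{n}\gamma ^{n})^{2}/n!<+\infty $ rather than mere finiteness of $T_{\gamma ,\mathbf{a}}(\mathbb{N})$ is well placed: the two conditions are inequivalent (e.g. $a_{n}\gamma ^{n}/n!=(-1)^{n}/\sqrt{n}$ gives a convergent series for $T_{\gamma ,\mathbf{a}}(\mathbb{N})$ but infinite $\rho $-norm), and only the finite-norm hypothesis — implicit in calling $\mathcal{T}^{\mathcal{F}}$ a Hilbert space under $\rho $ — makes the tail estimate, and hence either proof, go through. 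In short, your argument is not just an alternative; it closes two gaps that the paper's own proof leaves open.
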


\begin{proof}
We have already seen that $\mathcal{T}^{\mathcal{F}}$ is a complete metric
space. It remains to show that there exists a countable dense subset. Take
arbitrary $B\in \mathcal{B}(\mathbb{N}),$ and consider any $T_{\gamma ,%
\mathbf{a}}\in \mathcal{T}^{\mathcal{F}},$ where%
\begin{equation*}
T_{\gamma ,\mathbf{a}}(B)=\sum\limits_{n\in B}a_{n}\frac{\gamma ^{n}}{n!},
\end{equation*}%
with $a_{n},\gamma \in \Re ,$ $n\in \mathbb{N}.$ Let $\mathcal{C}=\{T_{s,%
\mathbf{q}}\}\subset \mathcal{T}^{\mathcal{F}},$ the collection of all
finite Taylor measures with rational $s\in \mathbb{Q}$ and $\mathbf{q}\in 
\mathbb{Q}^{\mathbb{\infty }}$. Since the rationals $\mathbb{Q}$ are dense
in $\Re $, we can find sequences of rationals $\{s_{k}\}_{k=1}^{+\infty }$
and $\{q_{n,k}\}_{k=1}^{+\infty },$ such that $\underset{k\rightarrow
+\infty }{\lim }s_{k}=\gamma ,$ and $\underset{k\rightarrow +\infty }{\lim }%
q_{n,k}=a_{n},$ for all $n\in \mathbb{N}.$ Now define the collection of
Taylor measures $\mathcal{C}_{\lim }=\{\underset{k\rightarrow +\infty }{\lim 
}T_{s_{k},\mathbf{q}_{k}}\},$ where%
\begin{equation*}
T_{s_{k},\mathbf{q}_{k}}(B)=\sum\limits_{n\in B}q_{n,k}\frac{s_{k}^{n}}{n!}%
\in \mathcal{T}^{\mathcal{F}},
\end{equation*}%
with $\mathbf{q}_{k}=(q_{0,k},q_{1,k},q_{2,k},...)\in \mathbb{Q}^{\infty },$
and $s_{k}\in \mathbb{Q}$. Consequently, we can write%
\begin{eqnarray*}
\underset{k\rightarrow +\infty }{\lim }T_{s_{k},\mathbf{q}_{k}}(B) &=&%
\underset{k\rightarrow +\infty }{\lim }\sum\limits_{n\in B}q_{n,k}\frac{%
s_{k}^{n}}{n!}=\sum\limits_{n\in B}\underset{k\rightarrow +\infty }{\lim }%
\left( q_{n,k}\frac{s_{k}^{n}}{n!}\right) = \\
&=&\sum\limits_{n\in B}\left( \underset{k\rightarrow +\infty }{\lim }%
q_{n,k}\right) \frac{\left( \underset{k\rightarrow +\infty }{\lim }%
s_{k}\right) ^{n}}{n!}=T_{\gamma ,\mathbf{a}}(B),
\end{eqnarray*}%
so that $T_{\gamma ,\mathbf{a}}\in \mathcal{C}_{\lim }$. Therefore, the
closure of the countable set $\mathcal{C}$ is $\overline{\mathcal{C}}=%
\mathcal{C}\cup \mathcal{C}_{\lim }=\mathcal{T}^{\mathcal{F}}$, and $%
\mathcal{T}^{\mathcal{F}}$ is separable as desired.
\end{proof}

We discuss the topology of $\mathcal{T}^{\mathcal{F}}$ induced by the norm $%
\left\Vert .\right\Vert _{\rho },$ following the usual approach. First, we
define an open ball in $\mathcal{T}^{\mathcal{F}}$ by%
\begin{equation}
b(T_{\gamma ,\mathbf{a}},r)=\{T_{\gamma _{1},\mathbf{a}_{1}}:\left\Vert
T_{\gamma ,\mathbf{a}}-T_{\gamma _{1},\mathbf{a}_{1}}\right\Vert _{\rho
}(B)<r,\forall B\in \mathcal{B}(\mathbb{N})\},  \label{OpenBall}
\end{equation}%
and then define an open set $O\subset \mathcal{T}^{\mathcal{F}}$ as the set
with the property that $\forall T_{\gamma ,\mathbf{a}}\in O,$ $\exists r>0$,
such that $b(T_{\gamma ,\mathbf{a}},r)\subset O$. Finally, denote by $%
\mathcal{O}(\mathcal{T}^{\mathcal{F}})$ the collection of all open sets of $%
\mathcal{T}^{\mathcal{F}},$ so that the Borel sets of $\mathcal{T}^{\mathcal{%
F}}$ are easily defined by $\mathcal{B}(\mathcal{T}^{\mathcal{F}})=\sigma (%
\mathcal{O}(\mathcal{T}^{\mathcal{F}})),$ the generated $\sigma -$field from
the open sets of $\mathcal{T}^{\mathcal{F}}$. Consequently, the pair $(%
\mathcal{T}^{\mathcal{F}},\mathcal{B}(\mathcal{T}^{\mathcal{F}}))$ is a
measurable space, which can be equipped with a measure or a probability
measure. This construction will allow us to define and study important
applications of $\mathcal{T}^{\mathcal{F}}$, e.g., stochastic versions of
Taylor measures (random Taylor measure).

Next we consider a first consequence of the theoretical development up to
this point.

\section{Taylor Probability Measures and Densities}

As a first broad application to probability theory of the new collection of
measures $\mathcal{T}^{\mathcal{F}},$ we take a closer look at the positive
and negative Taylor measures. In particular, when $T_{\gamma ,\mathbf{a}%
}^{+}<<\upsilon $ and $T_{\gamma ,\mathbf{a}}^{-}<<\upsilon ,$ an appeal to
the Radon-Nikodym theorem twice yields the derivatives $p_{\gamma ,\mathbf{a}%
}^{+}=\left[ \frac{dT_{\gamma ,\mathbf{a}}^{+}}{d\upsilon }\right] $ and $%
p_{\gamma ,\mathbf{a}}^{-}=\left[ \frac{dT_{\gamma ,\mathbf{a}}^{-}}{%
d\upsilon }\right] ,$ which will be called the positive and negative Taylor
derivatives. Obviously, when all derivatives exist, we can write%
\begin{equation}
p_{T}=p_{\gamma ,\mathbf{a}}^{+}-p_{\gamma ,\mathbf{a}}^{-}.
\label{TaylorDerivativePosNeg}
\end{equation}

In addition, since $0\leq T_{\gamma ,\mathbf{a}}^{+}(\mathbb{N}),T_{\gamma ,%
\mathbf{a}}^{-}(\mathbb{N})<+\infty ,$ we can build proper, normalized
densities (probability mass functions) via%
\begin{equation}
f_{T}^{+}(n|\gamma ,\mathbf{a})=\frac{p_{\gamma ,\mathbf{a}}^{+}(n)}{%
T_{\gamma ,\mathbf{a}}^{+}(\mathbb{N})},  \label{PositiveTaylorDensity}
\end{equation}%
and%
\begin{equation}
f_{T}^{-}(n|\gamma ,\mathbf{a})=\frac{p_{\gamma ,\mathbf{a}}^{-}(n)}{%
T_{\gamma ,\mathbf{a}}^{-}(\mathbb{N})},  \label{NegativeTaylorDensity}
\end{equation}%
$n\in \mathbb{N},\ $where $\gamma $ and $\mathbf{a}$ can be thought of as
parameters that require estimation. From a statistical modeling point of
view, this allows us to build models for $f_{T}^{+}$ and $f_{T}^{-}$, and
perform simulation, as well as approximate a finite Taylor measure. The
exposition above leads to the following definition.

\begin{definition}[Taylor Probability Measures]
\label{TaylorProbMeasure}Let $T_{\gamma ,\mathbf{a}}\in \mathcal{T}^{%
\mathcal{F}}$, and assume that $T_{\gamma ,\mathbf{a}}^{+}<<\upsilon $ and $%
T_{\gamma ,\mathbf{a}}^{-}<<\upsilon ,$ where $\upsilon $ denotes counting
measure. The positive Taylor density $f_{T}^{+}$ is defined as the
normalized Radon-Nikodym derivative of $T_{\gamma ,\mathbf{a}}^{+}$ wrt $%
\upsilon $, and the negative Taylor density $f_{T}^{-}$ is defined as the
normalized Radon-Nikodym derivative of $T_{\gamma ,\mathbf{a}}^{-}$ wrt $%
\upsilon $. As a result, we define the positive Taylor probability measure by%
\begin{equation}
Q_{\gamma ,\mathbf{a}}^{+}(B)=\sum\limits_{n\in B}f_{T}^{+}(n|\gamma ,%
\mathbf{a}),  \label{PositiveTaylorPM}
\end{equation}%
and the negative Taylor probability measure by%
\begin{equation}
Q_{\gamma ,\mathbf{a}}^{-}(B)=\sum\limits_{n\in B}f_{T}^{-}(n|\gamma ,%
\mathbf{a}),  \label{NegativeTaylorPM}
\end{equation}%
for all $B\in \mathcal{B}(\mathbb{N})$.
\end{definition}

The following result provides a connection between a finite Taylor measure
and the corresponding positive and negative Taylor probability measures. It
follows immediately by the definition of the measures involved.

\begin{theorem}
\label{TaylorProbDistMeasureConn}Let $T_{\gamma ,\mathbf{a}}\in \mathcal{T}^{%
\mathcal{F}}$ and $Q_{\gamma ,\mathbf{a}}^{+},$ $Q_{\gamma ,\mathbf{a}}^{-}$
the corresponding positive and negative Taylor probability measures. Then we
can write%
\begin{equation}
T_{\gamma ,\mathbf{a}}^{+}(B)=T_{\gamma ,\mathbf{a}}^{+}(\mathbb{N}%
)Q_{\gamma ,\mathbf{a}}^{+}(B),  \label{PosTaylorPM}
\end{equation}%
and%
\begin{equation}
T_{\gamma ,\mathbf{a}}^{-}(B)=T_{\gamma ,\mathbf{a}}^{-}(\mathbb{N}%
)Q_{\gamma ,\mathbf{a}}^{-}(B),  \label{NegTaylorPM}
\end{equation}%
so that%
\begin{equation}
T_{\gamma ,\mathbf{a}}(B)=T_{\gamma ,\mathbf{a}}^{+}(\mathbb{N})Q_{\gamma ,%
\mathbf{a}}^{+}(B)-T_{\gamma ,\mathbf{a}}^{-}(\mathbb{N})Q_{\gamma ,\mathbf{a%
}}^{-}(B),  \label{TaylorProbMeasures}
\end{equation}%
for all $B\in \mathcal{B}(\mathbb{N})$. Clearly, since $0<T_{\gamma ,\mathbf{%
a}}^{+}(\mathbb{N}),T_{\gamma ,\mathbf{a}}^{-}(\mathbb{N})<+\infty ,$ we
have $T_{\gamma ,\mathbf{a}}^{+}<<Q_{\gamma ,\mathbf{a}}^{+},$ $T_{\gamma ,%
\mathbf{a}}^{-}<<Q_{\gamma ,\mathbf{a}}^{-},$ $Q_{\gamma ,\mathbf{a}%
}^{+}<<T_{\gamma ,\mathbf{a}}^{+},$ and $Q_{\gamma ,\mathbf{a}%
}^{-}<<T_{\gamma ,\mathbf{a}}^{-}.$
\end{theorem}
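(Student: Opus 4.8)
The plan is to obtain the two variation identities by directly unpacking the definitions of the positive and negative Taylor probability measures from Definition \ref{TaylorProbMeasure}, and then to recover the signed identity by subtracting them through the Jordan decomposition. The entire argument is a rearrangement of sums; no machinery beyond the Radon--Nikodym representation of the variations with respect to counting measure is needed, which is why the statement can be asserted to follow immediately from the definitions.

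First I would record the Radon--Nikodym representation of the upper variation. Since $\upsilon$ is counting measure, the positive Taylor derivative $p_{\gamma,\mathbf{a}}^{+}$ satisfies $T_{\gamma,\mathbf{a}}^{+}(B)=\sum_{n\in B}p_{\gamma,\mathbf{a}}^{+}(n)$ for every $B\in\mathcal{B}(\mathbb{N})$, in complete analogy with (\ref{TaylorMeasureviaDensity}). Taking $B=\mathbb{N}$ identifies the normalizing constant $T_{\gamma,\mathbf{a}}^{+}(\mathbb{N})=\sum_{n\in\mathbb{N}}p_{\gamma,\mathbf{a}}^{+}(n)$, which is strictly positive and finite by hypothesis. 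Dividing through and invoking the definitions (\ref{PositiveTaylorDensity}) of $f_{T}^{+}$ and (\ref{PositiveTaylorPM}) of $Q_{\gamma,\mathbf{a}}^{+}$ gives
\begin{equation*}
T_{\gamma,\mathbf{a}}^{+}(B)=T_{\gamma,\mathbf{a}}^{+}(\mathbb{N})\sum_{n\in B}\frac{p_{\gamma,\mathbf{a}}^{+}(n)}{T_{\gamma,\mathbf{a}}^{+}(\mathbb{N})}=T_{\gamma,\mathbf{a}}^{+}(\mathbb{N})\,Q_{\gamma,\mathbf{a}}^{+}(B),
\end{equation*}
which is exactly (\ref{PosTaylorPM}). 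The identical computation applied to the lower variation $T_{\gamma,\mathbf{a}}^{-}$, using (\ref{NegativeTaylorDensity}) and (\ref{NegativeTaylorPM}), delivers (\ref{NegTaylorPM}).

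Next I would subtract the two identities. By the Jordan decomposition $T_{\gamma,\mathbf{a}}=T_{\gamma,\mathbf{a}}^{+}-T_{\gamma,\mathbf{a}}^{-}$ established in Section 2, substituting (\ref{PosTaylorPM}) and (\ref{NegTaylorPM}) produces (\ref{TaylorProbMeasures}) at once. The absolute continuity claims then follow because within each sign each pair of measures differs only by a strictly positive, finite scalar: from (\ref{PosTaylorPM}) the measure $T_{\gamma,\mathbf{a}}^{+}$ is the constant multiple $T_{\gamma,\mathbf{a}}^{+}(\mathbb{N})\in(0,+\infty)$ of $Q_{\gamma,\mathbf{a}}^{+}$, so the two have identical null sets and are mutually absolutely continuous, and likewise $T_{\gamma,\mathbf{a}}^{-}$ and $Q_{\gamma,\mathbf{a}}^{-}$.

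The only point requiring care --- and the sole place where the hypothesis $0<T_{\gamma,\mathbf{a}}^{\pm}(\mathbb{N})$, rather than merely $0\le$, is used --- is the division by the normalizing constants: were either variation to have total mass zero, the corresponding probability measure would be undefined and the factorization would collapse. Since the statement presupposes a genuinely signed Taylor measure with nontrivial positive and negative parts, this causes no difficulty, and the argument is otherwise entirely routine.
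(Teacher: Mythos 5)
Your proposal is correct and takes essentially the same approach as the paper: the paper provides no separate proof, stating only that the theorem ``follows immediately by the definition of the measures involved,'' and your argument is precisely that definitional unpacking (Radon--Nikodym representation wrt counting measure, normalization by $T_{\gamma ,\mathbf{a}}^{\pm }(\mathbb{N})$, subtraction via the Jordan decomposition, and the scalar-multiple observation for mutual absolute continuity). Your closing remark correctly identifies that strict positivity of the total variations is a hypothesis built into the statement rather than something to be proved.
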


In order to study general properties of $\mathcal{T}^{\mathcal{F}}$ we
worked with the signed measures, as in the previous section, but when it
comes to applying the theory created, we turn to choosing appropriate forms
for the positive and negative Taylor densities. In particular, equation (\ref%
{TaylorProbMeasures}) can be viewed from two directions; firstly, given a
Taylor measure $T_{\gamma ,\mathbf{a}}$, we wish to find the underlying
positive and negative Taylor densities $f_{T}^{+}$ and $f_{T}^{-},$ that
created this measure $T_{\gamma ,\mathbf{a}},$ and second, given two
discrete densities $f_{T}^{+}$ and $f_{T}^{-}$, we can use them to create a
specific measure $T_{\gamma ,\mathbf{a}}$. Since $f_{T}^{+}$ and $f_{T}^{-}$
are discrete probability densities over $\mathbb{N}$, we entertain a
flexible modeling choice in the following.

\begin{example}[Taylor measure via Taylor densities]
\label{ExTaylorDensity}Consider the power series family of probability mass
functions defined by%
\begin{equation}
f(n|\zeta ,\mathbf{b})=c(\zeta ,\mathbf{b})b_{n}\frac{\zeta ^{n}}{n!},
\label{PowerSeriesPMF}
\end{equation}%
$n\in \mathbb{N},$ where $\mathbf{b}=[b_{0},b_{1},b_{2},...],$ and assume
that the normalizing constant satisfies%
\begin{equation}
0<c(\zeta ,\mathbf{b})^{-1}=\sum\limits_{n=0}^{+\infty }b_{n}\frac{\zeta ^{n}%
}{n!}=T_{\zeta ,\mathbf{b}}(\mathbb{N})<+\infty ,
\label{NormConstantPosNegDens}
\end{equation}%
where $\zeta \geq 0,$ $b_{n}\geq 0,$ for all $n\in \mathbb{N}$. We define
the discrete probability measure corresponding to $f(n|\zeta ,\mathbf{b})$ by%
\begin{equation*}
Q_{\zeta ,\mathbf{b}}(B)=\sum\limits_{n\in B}f(n|\zeta ,\mathbf{b}),
\end{equation*}%
\newline
for all $B\in \mathcal{B}(\mathbb{N})$, with $Q$ absolutely continuous wrt
counting measure $\upsilon $, i.e., $f(n|\zeta ,\mathbf{b})=\left[ \frac{dQ}{%
d\upsilon }\right] $ ae wrt $\upsilon .$\newline
Now consider two densities from the family (\ref{PowerSeriesPMF}), $%
f_{1}(n|\zeta _{1},\mathbf{b}_{1})$ and $f_{2}(n|\zeta _{2},\mathbf{b}_{2}),$
which will be treated as the positive $f_{T}^{+}(n|\gamma ,\mathbf{a}),$ and
negative $f_{T}^{-}(n|\gamma ,\mathbf{a}),$ Taylor densities, respectively.
More precisely, assume that%
\begin{equation*}
f_{T}^{+}(n|\gamma ,\mathbf{a})=f_{1}(n|\zeta _{1},\mathbf{b}_{1})=c(\zeta
_{1},\mathbf{b}_{1})b_{1n}\frac{\zeta _{1}^{n}}{n!}=\frac{1}{T_{\gamma ,%
\mathbf{a}}^{+}(\mathbb{N})}b_{1n}\frac{\zeta _{1}^{n}}{n!},
\end{equation*}%
and 
\begin{equation*}
f_{T}^{-}(n|\gamma ,\mathbf{a})=f_{2}(n|\zeta _{2},\mathbf{b}_{2})=c(\zeta
_{2},\mathbf{b}_{2})b_{2n}\frac{\zeta _{2}^{n}}{n!}=\frac{1}{T_{\gamma ,%
\mathbf{a}}^{-}(\mathbb{N})}b_{2n}\frac{\zeta _{2}^{n}}{n!},
\end{equation*}%
so that $\gamma $ and $\mathbf{a}$ depend on $\zeta _{1},$ $\zeta _{2},$ $%
\mathbf{b}_{1},$ and $\mathbf{b}_{2}$, by construction, with $T_{\gamma ,%
\mathbf{a}}\in \mathcal{T}^{\mathcal{F}}$, given by%
\begin{equation*}
T_{\gamma ,\mathbf{a}}(B)=\sum\limits_{n\in B}a_{n}\frac{\gamma ^{n}}{n!}.
\end{equation*}%
Using equations (\ref{PositiveTaylorDensity}) and (\ref%
{NegativeTaylorDensity}), we can write 
\begin{equation*}
p_{\zeta _{1},\mathbf{b}_{1}}^{+}(n)=b_{1n}\frac{\zeta _{1}^{n}}{n!},
\end{equation*}%
and%
\begin{equation*}
p_{\zeta _{2},\mathbf{b}_{2}}^{-}(n)=b_{2n}\frac{\zeta _{2}^{n}}{n!},
\end{equation*}%
so that the Taylor derivative of equation (\ref{TaylorDerivativePosNeg})
becomes%
\begin{equation*}
p_{T}(n)=b_{1n}\frac{\zeta _{1}^{n}}{n!}-b_{2n}\frac{\zeta _{2}^{n}}{n!}.
\end{equation*}%
As a consequence, using equation (\ref{TaylorDensityatn}), we can connect $%
\gamma ,$ and $\mathbf{a}$ with $\zeta _{1},$ $\zeta _{2},$ $\mathbf{b}_{1},$
and $\mathbf{b}_{2},$ via the following equation%
\begin{equation*}
a_{n}\frac{\gamma ^{n}}{n!}=b_{1n}\frac{\zeta _{1}^{n}}{n!}-b_{2n}\frac{%
\zeta _{2}^{n}}{n!},
\end{equation*}%
so that%
\begin{equation}
a_{n}\gamma ^{n}=b_{1n}\zeta _{1}^{n}-b_{2n}\zeta _{2}^{n},
\label{Connectgammaalphas}
\end{equation}%
for all $n\in \mathbb{N}$. In view of Conjecture (\ref{ConjectureNumbers}),
given $\zeta _{1},$ $\zeta _{2}\geq 0,$ $b_{1n},$ $b_{2n}\geq 0,$ we can
find $\gamma $ $\in \Re ,$ and $\mathbf{a}=[a_{0},a_{1},...],$ $a_{n}\in \Re
,$ for all $n\in \mathbb{N},$ such that (\ref{Connectgammaalphas}) holds.
The converse is trivially satisfied; if $\gamma >0,$ take $\zeta _{1}=\zeta
_{2}=\gamma ,$ and $b_{1n}=a_{n}^{+}=\max \{0,a_{n}\},$ and $%
b_{2n}=a_{n}^{-}=\max \{0,-a_{n}\}$. When $\gamma <0,$ set $\zeta _{1}=\zeta
_{2}=-\gamma ,$ and $b_{1n}=\max \{0,(-1)^{n}a_{n}\},$ and $b_{2n}=\max
\{0,-(-1)^{n}a_{n}\}$.\newline
This example shows us exactly how we can create signed Taylor measures, via
the underlying positive and negative Taylor densities, since from equation (%
\ref{TaylorMeasureviaDensity}) we can write%
\begin{equation}
T_{\gamma ,\mathbf{a}}(B)=\sum\limits_{n\in B}\left( b_{1n}\frac{\zeta
_{1}^{n}}{n!}-b_{2n}\frac{\zeta _{2}^{n}}{n!}\right) ,
\label{TaylorviaDensities}
\end{equation}%
for all $B\in \mathcal{B}(\mathbb{N}).$
\end{example}

In view of the latter example and Definition \ref{TaylorProbMeasure}, we
prove a characterization of the positive Taylor probability measure.

\begin{theorem}[Discrete Probability Measure Representation]
Let $\upsilon $ denote counting measure.\label{DTaylorPMRepres} A set
function $Q:(\mathbb{N},\mathcal{B}(\mathbb{N}))\rightarrow \lbrack 0,1]$ is
a discrete probability measure with $Q<<\upsilon ,$ if and only if $Q$ is a
positive Taylor probability measure.
\end{theorem}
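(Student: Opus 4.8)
The plan is to prove the two implications separately, with essentially all the work concentrated in the ``only if'' direction. For the ``if'' direction, suppose $Q=Q_{\gamma,\mathbf{a}}^{+}$ is a positive Taylor probability measure. By Definition \ref{TaylorProbMeasure} it is given by $Q_{\gamma,\mathbf{a}}^{+}(B)=\sum_{n\in B}f_{T}^{+}(n|\gamma,\mathbf{a})$ with $f_{T}^{+}(n|\gamma,\mathbf{a})=p_{\gamma,\mathbf{a}}^{+}(n)/T_{\gamma,\mathbf{a}}^{+}(\mathbb{N})\ge 0$, and $\sum_{n\in\mathbb{N}}f_{T}^{+}(n|\gamma,\mathbf{a})=1$ because $\sum_{n}p_{\gamma,\mathbf{a}}^{+}(n)=T_{\gamma,\mathbf{a}}^{+}(\mathbb{N})$. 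Hence $Q$ takes values in $[0,1]$, is countably additive as the sum of a nonnegative summable mass function, and is a discrete probability measure. Since the only $\upsilon$-null set under counting measure is $\varnothing$ and $Q(\varnothing)=0$, we obtain $Q<<\upsilon$ automatically, completing this direction.

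For the ``only if'' direction I would argue by explicit construction. Let $Q$ be a discrete probability measure with $Q<<\upsilon$, and write $q(n)=Q(\{n\})\ge 0$ for its probability mass function, so that $\sum_{n\in\mathbb{N}}q(n)=1$. I would then build a Taylor measure of purely positive signature by taking $\gamma=1$ and $a_{n}=n!\,q(n)\ge 0$ for all $n\in\mathbb{N}$. With this choice $T_{1,\mathbf{a}}(\{n\})=a_{n}\gamma^{n}/n!=q(n)\ge 0$, so $T_{1,\mathbf{a}}$ is already a nonnegative measure; its total mass is $T_{1,\mathbf{a}}(\mathbb{N})=\sum_{n}q(n)=1<+\infty$, and therefore $T_{1,\mathbf{a}}\in\mathcal{T}^{\mathcal{F}}$.

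The crucial simplification is that for this $T_{1,\mathbf{a}}$ the Jordan decomposition is trivial: one may take the Hahn decomposition $A^{+}=\mathbb{N}$, $A^{-}=\varnothing$, whence $T_{1,\mathbf{a}}^{+}=T_{1,\mathbf{a}}$ and $T_{1,\mathbf{a}}^{-}=0$. Consequently the positive Taylor derivative is $p_{1,\mathbf{a}}^{+}(n)=q(n)$, the normalizing constant is $T_{1,\mathbf{a}}^{+}(\mathbb{N})=1>0$ (so the normalization in Definition \ref{TaylorProbMeasure} is well posed precisely because $Q$ is a probability measure and hence nonzero), and $f_{T}^{+}(n|1,\mathbf{a})=q(n)/1=q(n)$. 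Therefore $Q_{1,\mathbf{a}}^{+}(B)=\sum_{n\in B}q(n)=Q(B)$ for every $B\in\mathcal{B}(\mathbb{N})$, exhibiting $Q$ as a positive Taylor probability measure. This is exactly the mechanism of Example \ref{ExTaylorDensity} with the power series family specialized to $\zeta=1$, $b_{n}=a_{n}$, and normalizing constant equal to $1$.

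I do not expect a genuine obstacle; the only point requiring care is recognizing that choosing all $a_{n}\ge 0$ collapses the signed-measure machinery, so that the positive part coincides with the whole measure and the normalization is automatic. I would also remark that the representation is far from unique: any $\gamma>0$ works upon rescaling to $a_{n}=n!\,q(n)\gamma^{-n}$, mirroring the non-uniqueness already noted after Conjecture \ref{ConjectureNumbers}.
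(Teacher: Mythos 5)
Your proposal is correct and follows essentially the same route as the paper: in the ``only if'' direction the paper also sets $a_{n}=n!\,p_{n}/\gamma^{n}$ (for an arbitrary $\gamma>0$, of which your $\gamma=1$ is the special case you note), observes that nonnegativity of $\gamma$ and $a_{n}$ forces $T_{\gamma,\mathbf{a}}^{-}=0$ so the measure equals its positive part with total mass $1$, and identifies $f_{T}^{+}(n|\gamma,\mathbf{a})=p_{n}$. Your treatment of the ``if'' direction and of absolute continuity wrt counting measure is just a more explicit version of what the paper dismisses as trivial.
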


\begin{proof}
The if part is trivially satisfied by Definition \ref{TaylorProbMeasure}.
For the other direction, assume that $Q:(\mathbb{N},\mathcal{B}(\mathbb{N}%
))\rightarrow \lbrack 0,1]$ is a discrete probability measure, and wlog take
its support to be $\mathbb{N}$, i.e., $p_{n}=Q(\{n\})>0,$ $n\in \mathbb{N},$
with $\sum_{n\in \mathbb{N}}p_{n}=1,$ and $Q(B)=\sum_{n\in B}p_{n},$ for all 
$B\in \mathcal{B}(\mathbb{N}),$ since $Q<<\upsilon .$ Take arbitrary $B\in 
\mathcal{B}(\mathbb{N}),$ and write%
\begin{equation*}
Q(B)=\sum_{n\in B}p_{n}=\sum_{n\in B}\frac{n!}{\gamma ^{n}}p_{n}\frac{\gamma
^{n}}{n!}=T_{\gamma ,\mathbf{a}}(B),
\end{equation*}%
where $\mathbf{a}=[a_{0},a_{1},a_{2},...],$ $a_{n}=n!p_{n}/\gamma ^{n}>0$, $%
n\in \mathbb{N},$ and choose any $\gamma >0.$ Now since $\gamma ,a_{n}>0$,
we must have $T_{\gamma ,\mathbf{a}}^{-}(B)=0,$ for all $B\in \mathcal{B}(%
\mathbb{N})$, so that $T_{\gamma ,\mathbf{a}}(B)=T_{\gamma ,\mathbf{a}%
}^{+}(B),$ with $T_{\gamma ,\mathbf{a}}^{+}(\mathbb{N})=Q(\mathbb{N})=1.$
From equation (\ref{PosTaylorPM}), we have that $T_{\gamma ,\mathbf{a}%
}^{+}(B)=Q_{\gamma ,\mathbf{a}}^{+}(B),$ with the positive Taylor
probability mass function given by%
\begin{equation*}
f_{T}^{+}(n|\gamma ,\mathbf{a})=p_{n}=\frac{p_{\gamma ,\mathbf{a}}^{+}(n)}{%
T_{\gamma ,\mathbf{a}}^{+}(\mathbb{N})}=p_{\gamma ,\mathbf{a}}^{+}(n)=a_{n}%
\frac{\gamma ^{n}}{n!},
\end{equation*}%
and the claim holds.
\end{proof}

The following example illustrates explicitly the identifiability issues of
the space $\mathcal{T}^{\mathcal{F}}.$

\begin{example}[Poisson-Taylor Signed Measures]
As a special case of the previous example, consider two Poisson probability
measures with densities wrt counting measure which are special cases of (\ref%
{PowerSeriesPMF}). In particular, we take $b_{1n}=b_{2n}=1,$ for all $n\in 
\mathbb{N}$, and $\zeta _{1},$ $\zeta _{2}>0,$ so that%
\begin{equation*}
\sum\limits_{n\in B}\left( \frac{\zeta _{1}^{n}}{n!}-\frac{\zeta _{2}^{n}}{n!%
}\right) =\sum\limits_{n\in B}\frac{\zeta _{1}^{n}-\zeta _{2}^{n}}{n!}%
=\sum\limits_{n\in B}a_{n}\frac{\gamma ^{n}}{n!}=T_{1,\mathbf{a}}(B),
\end{equation*}%
with $\gamma =1,$ and $\mathbf{a}=(a_{0},a_{1},...),$ $a_{n}=\zeta
_{1}^{n}-\zeta _{2}^{n}\in \Re $. Clearly, $T_{1,\mathbf{a}}(\mathbb{N}%
)=e^{\zeta _{1}}-e^{\zeta _{2}}<+\infty .$ Note that the representation (\ref%
{TaylorviaDensities}) is not unique, since%
\begin{equation*}
\sum\limits_{n\in B}\frac{\zeta _{1}^{n}-\zeta _{2}^{n}}{n!}%
=\sum\limits_{n\in B}\left( 1-\left( \frac{\zeta _{2}}{\zeta _{1}}\right)
^{n}\right) \frac{\zeta _{1}^{n}}{n!}=T_{\zeta _{1},\mathbf{a}_{1}}(B),
\end{equation*}%
with $\gamma =\zeta _{1},$ and $\mathbf{a}_{1}=(a_{1,0},a_{1,1},...),$ $%
a_{1,n}=1-\left( \zeta _{2}/\zeta _{1}\right) ^{n}\in \Re $, and%
\begin{equation*}
\sum\limits_{n\in B}\frac{\zeta _{1}^{n}-\zeta _{2}^{n}}{n!}%
=\sum\limits_{n\in B}\left( \left( \frac{\zeta _{1}}{\zeta _{2}}\right)
^{n}-1\right) \frac{\zeta _{2}^{n}}{n!}=T_{\zeta _{2},\mathbf{a}_{2}}(B),
\end{equation*}%
with $\gamma =\zeta _{2},$ and $\mathbf{a}_{2}=(a_{2,0},a_{2,1},...),$ $%
a_{2,n}=\left( \zeta _{1}/\zeta _{2}\right) ^{n}-1\in \Re $. The resulting
signed Taylor measure $T_{1,\mathbf{a}}=T_{\zeta _{1},\mathbf{a}%
_{1}}=T_{\zeta _{2},\mathbf{a}_{2}},$ will be aptly called the
Poisson-Taylor signed measure.
\end{example}

One of the major consequences of Taylor probability measures is that it
allows us approximations in $\mathcal{T}^{\mathcal{F}}$ via statistical
simulation. We end this section with an illustrative example of this idea.

\begin{example}[Taylor Measure Approximation]
\label{TaylorMeasureApproxEx}By Definition (\ref{TaylorProbMeasure}) we can
easily approximate the value of the signed Taylor measure via simulation as
follows; consider two independent, discrete random variables $N_{1}\thicksim
f_{1}(n|\zeta _{1},\mathbf{b}_{1})$ and $N_{2}\thicksim f_{2}(n|\zeta _{2},%
\mathbf{b}_{2}),$ i.e., $N_{1},N_{2}:(\Omega ,\mathcal{A},P)\rightarrow (%
\mathbb{N},\mathcal{B}(\mathbb{N}))$, two measurable functions from a
probability space $(\Omega ,\mathcal{A},P)$ into the measurable space $(%
\mathbb{N},\mathcal{B}(\mathbb{N})),$ and generate two independent random
samples $n_{1,1},...,n_{1,L_{1}}$ $\overset{iid}{\thicksim }f_{1}(n|\zeta
_{1},\mathbf{b}_{1})$ and $n_{2,1},...,n_{2,L_{2}}\overset{iid}{\thicksim }%
f_{2}(n|\zeta _{2},\mathbf{b}_{2}).$ Now using (\ref{TaylorProbMeasures}) we
can write%
\begin{eqnarray*}
T_{\gamma ,\mathbf{a}}(B) &=&T_{\zeta _{1},\mathbf{b}_{1}}^{+}(\mathbb{N}%
)\sum\limits_{n\in B}f_{T}^{+}(n|\zeta _{1},\mathbf{b}_{1})-T_{\zeta _{2},%
\mathbf{b}_{2}}^{-}(\mathbb{N})\sum\limits_{n\in B}f_{T}^{-}(n|\zeta _{2},%
\mathbf{b}_{2}) \\
&=&T_{\zeta _{1},\mathbf{b}_{1}}^{+}(\mathbb{N})P(N_{1}\in B)-T_{\zeta _{2},%
\mathbf{b}_{2}}^{-}(\mathbb{N})P(N_{2}\in B) \\
&=&T_{\zeta _{1},\mathbf{b}_{1}}^{+}(\mathbb{N})\mathbb{E}\left[ I(N_{1}\in
B)\right] -T_{\zeta _{2},\mathbf{b}_{2}}^{-}(\mathbb{N})\mathbb{E}\left[
I(N_{2}\in B)\right] ,
\end{eqnarray*}%
so that using the Strong Law of Large Numbers (SLLN) we have%
\begin{equation*}
\frac{T_{\zeta _{1},\mathbf{b}_{1}}^{+}(\mathbb{N})}{L_{1}}%
\sum\limits_{i=1,...,L_{1}}I(n_{1,i}\in B)-\frac{T_{\zeta _{2},\mathbf{b}%
_{2}}^{-}(\mathbb{N})}{L_{2}}\sum\limits_{j=1,...,L_{2}}I(n_{2,j}\in B)%
\overset{a.s.}{\rightarrow }T_{\gamma ,\mathbf{a}}(B),
\end{equation*}%
for all $B\in \mathcal{B}(\mathbb{N}),$ where $\mathbb{E}(.)$ denotes
expectation. When the normalizing constants $T_{\zeta _{1},\mathbf{b}%
_{1}}^{+}(\mathbb{N})$ and $T_{\zeta _{2},\mathbf{b}_{2}}^{-}(\mathbb{N})$
are not known in closed form, they can be approximated as well using the
aforementioned random samples, which can be obtained even if the normalizing
constant is not known, e.g., rejection methods or Metropolis-Hastings
samplers. An alternative approach, which is more straightforward, is to write%
\begin{equation*}
T_{\zeta _{1},\mathbf{b}_{1}}^{+}(\mathbb{N})=\sum\limits_{n\in \mathbb{N}%
}b_{1n}\frac{\zeta _{1}^{n}}{n!}=\sum\limits_{n\in \mathbb{N}}e^{\zeta
_{1}}b_{1n}\frac{e^{-\zeta _{1}}\zeta _{1}^{n}}{n!},
\end{equation*}%
so that%
\begin{equation*}
\frac{e^{\zeta _{1}}}{L_{1}}\sum\limits_{i=1,...,L_{1}}b_{1n_{1,i}}\overset{%
a.s.}{\rightarrow }T_{\zeta _{1},\mathbf{b}_{1}}^{+}(\mathbb{N})
\end{equation*}%
where $n_{1,1},...,n_{1,L_{1}}$ $\overset{iid}{\thicksim }Poisson(\zeta
_{1}),$ and%
\begin{equation*}
T_{\zeta _{2},\mathbf{b}_{2}}^{-}(\mathbb{N})=\sum\limits_{n\in \mathbb{N}%
}b_{2n}\frac{\zeta _{2}^{n}}{n!}=\sum\limits_{n\in \mathbb{N}}e^{\zeta
_{2}}b_{2n}\frac{e^{-\zeta _{2}}\zeta _{2}^{n}}{n!},
\end{equation*}%
which yields%
\begin{equation*}
\frac{e^{\zeta _{2}}}{L_{2}}\sum\limits_{i=1,...,L_{2}}b_{2n_{2,i}}\overset{%
a.s.}{\rightarrow }T_{\zeta _{2},\mathbf{b}_{2}}^{-}(\mathbb{N}),
\end{equation*}%
where $n_{2,1},...,n_{2,L_{2}}$ $\overset{iid}{\thicksim }Poisson(\zeta
_{2}) $.
\end{example}

\section{Stochastic Taylor Measures}

As a second application of the new collection of measures $\mathcal{T}^{%
\mathcal{F}},$ we consider introducing stochasticity to the space $\mathcal{T%
}^{\mathcal{F}}.$ In what follows, let $(\Omega ,\mathcal{A},P)$ denote a
probability space.

\begin{definition}[Stochastic Taylor Measure]
\label{RandTaylorMeas}Consider the collection of finite Taylor measures $%
\mathcal{T}^{\mathcal{F}}$ defined on the measurable space $(\mathbb{N},%
\mathcal{B}(\mathbb{N}))$. Let $(\mathcal{T}^{\mathcal{F}},\mathcal{V})$
denote the measurable space of $\mathcal{T}^{\mathcal{F}},$ where $\mathcal{V%
}$ is defined as the smallest $\sigma $-field such that for each $B\in 
\mathcal{B}(\mathbb{N}),$ the map $U:\mathcal{T}^{\mathcal{F}}\rightarrow
\Re ,$ defined by $U(Q)=Q(B),$ $Q\in \mathcal{T}^{\mathcal{F}},$ is $%
\mathcal{V}$-measurable. Then, we define a stochastic (or random) Taylor
measure (STM) as the measurable map $X:(\Omega ,\mathcal{A},P)\rightarrow (%
\mathcal{T}^{\mathcal{F}},\mathcal{V}),$ where $X^{-1}(V)\in \mathcal{A},$
for all $V\in \mathcal{V}.$
\end{definition}

Since for any element $T_{\gamma ,\mathbf{a}}$ of $\mathcal{T}^{\mathcal{F}%
}, $ there exist by definition a sequence $\mathbf{a}=[a_{0},a_{1},...]\in
\Re ^{\infty },$ $a_{n}\in \Re ,$ and a parameter $\gamma \in \Re ,$ that
help define $T_{\gamma ,\mathbf{a}},$ a natural appoach to creating random
Taylor measures emerges; in particular, we consider introducing
stochasticity in the sequence $\mathbf{a}$ and parameter $\gamma ,$ such
that,%
\begin{equation}
X(\omega )(B)=T_{\gamma (\omega ),\mathbf{a}(\omega )}(B)=\sum\limits_{n\in
B}a_{n}(\omega )\frac{\gamma (\omega )^{n}}{n!},  \label{STMDef}
\end{equation}%
for all $\omega \in \Omega ,$ and $B\in \mathcal{B}(\mathbb{N}),$ will allow
us to create a variety of STMs, i.e., consider a random sequence $\mathbf{a}%
:(\Omega ,\mathcal{A},P)\rightarrow (\Re ^{\infty },\mathcal{B}(\Re ^{\infty
})),$ and random variable $\gamma :(\Omega ,\mathcal{A},P)\rightarrow (\Re ,%
\mathcal{B}(\Re ))$. Some examples are in order.

\begin{example}[Moments of STMs]
Assume that $\gamma =I_{A},$ with $A\in \mathcal{A},$ the indicator function
of the measurable set $A$, independent of the random variables $a_{n},$
assumed to be independent with means {$\mu _{na}$ and variances $\sigma
_{na}^{2}$}, for each $n\in \mathbb{N}$. For a fixed $B\in \mathcal{B}(%
\mathbb{N}),$ the STM becomes%
\begin{equation*}
X(\omega )(B)=\sum\limits_{n\in B}a_{n}(\omega )\frac{I_{A}^{n}(\omega )}{n!}%
=I_{A}(\omega )\sum\limits_{n\in B}\frac{a_{n}(\omega )}{n!},
\end{equation*}%
with the convention $0^{0}=1$, so that $X(B)$ is such that%
\begin{equation*}
\mu =\mathbb{E}X(B)=\mathbb{E}\left( I_{A}\sum\limits_{n\in B}\frac{a_{n}}{n!%
}\right) =P(A)\sum\limits_{n\in B}\frac{\mu _{na}}{n!},
\end{equation*}%
and%
\begin{eqnarray*}
\sigma ^{2} &=&Var\left( I_{A}\sum\limits_{n\in B}\frac{a_{n}}{n!}\right)
=\sum\limits_{n\in B}Var\left( I_{A}\frac{a_{n}}{n!}\right)
=\sum\limits_{n\in B}\frac{1}{n!}Var\left( I_{A}a_{n}\right) \\
&=&\sum\limits_{n\in B}\frac{1}{n!}\left( \mathbb{E}\left[ \left(
I_{A}a_{n}\right) ^{2}\right] -\left[ \mathbb{E}\left( I_{A}\right) \mathbb{E%
}\left( a_{n}\right) \right] ^{2}\right) \\
&=&\sum\limits_{n\in B}\frac{1}{n!}\left( P(A)\mathbb{E}\left[ a_{n}^{2}%
\right] -P(A)^{2}\mu _{na}^{2}\right) =\sum\limits_{n\in B}\frac{P(A)}{n!}%
\left[ \sigma _{na}^{2}+(1-P(A))\mu _{na}^{2}\right] .
\end{eqnarray*}
\end{example}

\begin{example}[Measurability and STMs]
\label{MeasurableSTMs}Consider the setup of the previous example, and
simplify further by taking $a_{n}=n!/|B|$, fixed random variables, with $%
|B|=card(B)$, the cardinality of $B$, the STM becomes the indicator $%
X(\omega )(B)=I_{A}(\omega ),$ for all $B\in \mathcal{B}(\mathbb{N}).$
Alternatively, and more generally, set $\gamma =1,$ and $%
a_{n}=n!c_{n}I_{A_{n}},$ with $\{A_{n}\}_{n\in B}$ a partition of $\Omega ,$ 
$A_{n}\in \mathcal{A},$ for some real constants $c_{n}$, so that the STM
becomes a simple random variable (in canonical form), i.e.,%
\begin{equation}
X(\omega )(B)=\sum\limits_{n\in B}c_{n}I_{A_{n}}(\omega ),  \label{SimpleSTM}
\end{equation}%
for all $\omega \in \Omega .$ As a result, measurable simple functions, the
main ingredient and building block of measure and probability theory, are
special cases of STMs. Recall that (e.g., \cite{micheas2018theory}, Theorem
3.4), for any measurable function $f:\Re \rightarrow \Re ,$ there exists a
monotone sequence of simple, measurable functions $\{f_{k}\}_{k=0}^{+\infty
},$ such that $f_{k}(\omega )\rightarrow f(\omega ),$ as $k\rightarrow
+\infty ,$ for all $\omega \in \Omega .$ Consequently, any measurable
function (in particular, random variables) can be expressed as a monotone
limit of a sequence of STMs. Note here that measurability of a function does
not imply that the function is analytic or continuous, most notably, the
indicator function $I_{A}$.
\end{example}

We collect some important examples under the Gaussian assumption next.

\begin{example}[Gaussian STMs]
\label{GaussianSTMs}Assuming that the parameters {$a_{n}$} and $\gamma $
follow Gaussian distributions, the resulting STMs will be naturally called
Gaussian STMs (GSTM). We present some special cases of equation (\ref{STMDef}%
) below, in order to appreciate the plethora of stochastic processes and
mathematical applications we can construct via STMs.

\begin{enumerate}
\item Normal random variables: Assume that $\gamma $ is a constant (random
variable) and {take $a_{n}\overset{iid}{\thicksim }N(\mu _{a},\sigma
_{a}^{2})$}. {This is the simplest way of introducing randomness into }$%
T_{\gamma ,\mathbf{a}}.$ Now if $B=\{0\},$ $X(\{0\})=a_{0}\thicksim N(\mu
_{a},\sigma _{a}^{2})$, so that the GSTM contains the univariate normal as a
special case. Letting $B\in \mathcal{B}(\mathbb{N}),$ we have $X(B)\thicksim 
$ $N\left( \mu _{a}\sum\limits_{n\in B}\frac{\gamma ^{n}}{n!},\sigma
_{a}^{2}\sum\limits_{n\in B}\frac{\gamma ^{2n}}{(n!)^{2}}\right) ,$ and if $%
B=\mathbb{N},$ $X(\mathbb{N})\thicksim N\left( \mu _{a}e^{\gamma },\sigma
_{a}^{2}\sum\limits_{n\in \mathbb{N}}\frac{\gamma ^{2n}}{(n!)^{2}}\right) .$

\item Analytic functions: In the previous example, when {$a_{n}\overset{indep%
}{\thicksim }N(\mu _{na},\sigma _{na}^{2})$}, we have%
\begin{equation*}
\mathbb{E}(X(\mathbb{N}))=\sum_{n=0}^{+\infty }\mu _{na}\frac{\gamma ^{n}}{n!%
},
\end{equation*}%
so that setting $\gamma (x)=x-x_{0},$ $x,x_{0}\in \Re ,$ and $\mu
_{na}=f^{(n)}(x_{0}),$ for some analytic function $f$ at $x_{0}$, we can
write%
\begin{equation*}
\mathbb{E}(X(\mathbb{N}))=\sum_{n=0}^{+\infty }\mu _{na}\frac{\gamma (x)^{n}%
}{n!}=\sum_{n=0}^{+\infty }\frac{f^{(n)}(x_{0})}{n!}(x-x_{0})^{n}=f(x),
\end{equation*}%
by Taylor's Theorem, and consequently, the GSTM can used to approximate
analytic functions.

\item Random walk {GSTMs}: A random sequence defined through sums of iid
random variables is a random walk. In particular, let $X_{k}\thicksim Q,$ $%
k\in \mathbb{N}^{+}=$ $\{1,2,...\},$ defined on $(\Omega ,\mathcal{A},P)$
and taking values in a state space $\Psi ,$ for some (step) distribution $Q$%
. Define $S_{t}(\omega )=0,$ if $t=0$ and $S_{t}(\omega )=X_{1}(\omega
)+\dots +X_{t}(\omega ),$ $t\in \mathbb{N}^{+},$ for all $\omega \in \Omega
. $ Then $S=\{S_{t}:t\in \mathbb{N}\}$ is a discrete time parameter
stochastic process with state space $\Psi $. Setting $a_{n}(\omega
)=n!X_{n}(\omega ),$ $X_{n}\thicksim Q,$ $\gamma (\omega )=1,$ and $%
B=\{0,1,...,t\},$ we have%
\begin{equation*}
S_{t}(\omega )=X(\omega )(B)=\sum\limits_{n=0}^{t}a_{n}(\omega )\frac{\gamma
(\omega )^{n}}{n!}=\sum\limits_{n=0}^{t}X_{n}(\omega ),
\end{equation*}%
and therefore, general random walks are special cases of {STMs. In
particular, setting }$\Psi =\Re ,$ and taking $Q$ the probability
distribution of a Gaussian random variable, we obtain the standard random
walk with normal steps as a special case of a GSTM.

\item Martingales: Recall that if $X$ is an iid sequence of integrable
random variables defined on the probability space $(\Omega ,\mathcal{A},P)$
with $E(X_{n})=0$, for all $n\in \mathbb{N},$ then $S_{n}=\tsum%
\limits_{i=1}^{n}X_{i}$ is a martingale, wrt minimal filtration $\mathcal{F}%
=(\mathcal{F}_{1},\mathcal{F}_{2},\dots ),$ $\mathcal{F}_{n}=\sigma
(X_{1},\dots ,X_{n}).$ Consequently, the random walk GSTM of the previous
example is also a martingale, and therefore we can use results from
martingale theory in order to study STMs, e.g., convergence theorems.

\item Autoregressive {GSTMs}: Consider an autoregressive time series, i.e.,
a stochastic process $S=\{S_{t}:t\in \mathbb{N}\},$ with%
\begin{equation*}
S_{t}=\phi S_{t-1}+\varepsilon _{t},
\end{equation*}%
for all $t\in \mathbb{N},$ where\ $\varepsilon _{t}\overset{iid}{\thicksim }%
N(0,\sigma ^{2}),$ $S_{0}=0$\ and $0\leq \phi \leq 1,$ fixed. Then iterating
backwards $t$ times, we can write%
\begin{equation*}
S_{t}=\sum\limits_{j=0}^{t-1}\phi ^{j}\varepsilon _{t-j},
\end{equation*}%
$t\in \mathbb{N}^{+},$ so that setting $\gamma (\omega )=1,$ and $%
a_{j}(\omega )=\phi ^{j}j!\varepsilon _{t-j}(\omega ),$ we have%
\begin{equation*}
S_{t}(\omega )=\sum\limits_{j=0}^{t-1}a_{j}(\omega )\frac{\gamma (\omega
)^{j}}{j!}=\sum\limits_{j=0}^{t-1}\phi ^{j}\varepsilon _{t-j}(\omega ),
\end{equation*}%
and therefore this standard example from time series modeling is indeed
another special case of GSTMs. More precisely, this is a random walk with
independent step distributions $Q_{j}$ (not identically distributed as in
the previous example), corresponding to normal distributions with means $0$
and variances $\phi ^{2j}\sigma ^{2},$ $j\in \mathbb{N}^{+}.$

\item Brownian Motion {GSTMs}: Consider a sequence of iid random variables $%
\{Z_{n}\}_{n=0}^{+\infty }$ with mean $\mu $ and variance $\sigma ^{2},$ $%
0<\sigma ^{2}<\infty $ and define the random walk $S=(S_{0},S_{1},\dots ),$
with $S_{0}=0$ and $S_{k}=\tsum\limits_{i=1}^{k}Z_{i},$ $k\in \mathbb{N}%
^{+}. $ Let $\mathcal{C}_{[0,1]}^{\Re }$ denote the collection of
continuous, $\Re $-valued functions from the interval $[0,1]$. We can build $%
\mathcal{C}_{[0,1]}^{\Re }$-valued random variables $X^{(n)}$ by defining
the function $t\longmapsto X_{t}^{(n)}(\omega )$ as follows. First consider
values for $t$ equal to a multiple of $\frac{1}{n},$ that is, for each $n\in 
\mathbb{N},$ we let%
\begin{equation*}
X_{k/n}^{(n)}=\frac{1}{\sigma \sqrt{n}}\tsum\limits_{i=1}^{k}(Z_{i}-\mu )=%
\frac{S_{k}-k\mu }{\sigma \sqrt{n}},
\end{equation*}%
for $k=0,1,2,\dots ,n$ and the random variable $X_{t}^{(n)},$ $0\leq t\leq
1, $ can be made continuous for $t\in \lbrack 0,1]$ by assuming linearity
over each of the intervals $I_{k,n}=\left[ (k-1)/n,k/n\right] ,$ that is, we
linearly interpolate the value of $X_{t}^{(n)},$ for any $t\in I_{k,n},$
based on the boundary values at $X_{(k-1)/n}^{(n)}$ and $X_{k/n}^{(n)}$. Now
note that the increment $X_{(k+1)/n}^{(n)}-X_{k/n}^{(n)}=(Z_{k+1}-\mu
)/(\sigma \sqrt{n})$ is independent of the $\sigma $-field $\mathcal{F}%
_{k/n}^{X^{(n)}}=\sigma (Z_{1},\dots ,Z_{k})$ and $%
X_{(k+1)/n}^{(n)}-X_{k/n}^{(n)}$ has zero mean and variance $%
1/n=(k+1)/n-k/n. $ When the steps $Z_{n}$ follow normal distributions, the
contruction above leads to the stochastic process $\{X_{t}^{(n)}:t\in
\lbrack 0,1]\}$ with (weak) limiting distribution known as Brownian motion
in the interval $[0,1]$ (see \cite{micheas2018theory}, Theorem 7.11). This
is once again a special case of a GSTM when $\mu =0.$
\end{enumerate}
\end{example}

As the examples above illustrate, STMs provide a general, unifying framework
that contains as special cases many important classic mathematical and
probabilistic concepts. Next we consider a generalization of Taylor's
theorem via Taylor measures.

\section{Analytic Functions and Taylor Measures}

In Example \ref{GaussianSTMs}.2, we saw that we can obtain analytic
functions as expectations of STMs. In this section, we consider the
deterministic case, where we connect the space $\mathcal{T}^{\mathcal{F}}$
with any analytic, real-valued function, by introducing an input $x\in \Re $
in the parameter $\gamma $ of a finite Taylor measure, i.e., we define $%
T_{\gamma (x),\mathbf{a}}$, for some analytic function $\gamma :\Re
\rightarrow \Re $. The following representation theorem is applicable to any
analytic function.

\begin{theorem}[Taylor Measure Representation]
\label{TMRThm}Assume that the function $f:\Re \rightarrow \Re ,$ is analytic
at a point $x_{0}\in \Re .$ Then there exists an analytic, function $\gamma
:\Re \rightarrow \Re $, a sequence $\mathbf{a}\in \Re ^{\infty },$ and a
finite Taylor measure $T_{\gamma (x),\mathbf{a}}\in \mathcal{T}^{\mathcal{F}%
} $, such that the function $f(x),$ for any $x\in \Re ,$ can be represented
as%
\begin{equation}
f(x)=T_{\gamma (x),\mathbf{a}}(\mathbb{N}).  \label{TaylorRepresentation}
\end{equation}%
Moreover, there exists an analytic function $\zeta :\Re \rightarrow \Re $
and $\mathbf{b}\in \Re ^{\infty },$ such that%
\begin{equation}
f^{n}(x)=T_{\zeta (x),\mathbf{b}}(\mathbb{N})\in \mathcal{T}^{\mathcal{F}},
\label{TaylorRepresentationPower}
\end{equation}%
for all $n\in \mathbb{N}.$
\end{theorem}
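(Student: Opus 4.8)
The plan is to read both representations directly off the Taylor expansion (\ref{FullTaylorExp}), so that the statement reduces to the convergence bookkeeping already carried out in the Taylor Measure theorem and in Example \ref{GaussianSTMs}.2.

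First, for (\ref{TaylorRepresentation}) I would take the candidates suggested by (\ref{FullTaylorExp}): set $\gamma (x)=x-x_{0}$, which is a polynomial and hence analytic (indeed entire), and let $\mathbf{a}=[a_{0},a_{1},\dots ]$ with $a_{n}=f^{(n)}(x_{0})\in \Re$, so that $\mathbf{a}\in \Re ^{\infty }$. With these choices,
\begin{equation*}
T_{\gamma (x),\mathbf{a}}(\mathbb{N})=\sum_{n=0}^{+\infty }a_{n}\frac{\gamma (x)^{n}}{n!}=\sum_{n=0}^{+\infty }\frac{f^{(n)}(x_{0})}{n!}(x-x_{0})^{n}=f(x),
\end{equation*}
the last equality being precisely the analyticity of $f$ at $x_{0}$ via (\ref{FullTaylorExp}). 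This is the deterministic shadow of Example \ref{GaussianSTMs}.2, now read as an identity of set functions evaluated at $\mathbb{N}$ rather than of expectations of an STM.

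The one substantive step is verifying membership $T_{\gamma (x),\mathbf{a}}\in \mathcal{T}^{\mathcal{F}}$, i.e. finiteness. Since the right-hand side is a power series in $(x-x_{0})$, it converges absolutely inside its radius of convergence $R$, whence the total variation $\Vert T_{\gamma (x),\mathbf{a}}\Vert (\mathbb{N})=\sum_{n}|a_{n}|\,|x-x_{0}|^{n}/n!$ is finite and $T_{\gamma (x),\mathbf{a}}$ is a finite signed measure for every $x$ with $|x-x_{0}|<R$. I expect this to be the main caveat rather than a deep obstacle: the statement asserts (\ref{TaylorRepresentation}) \emph{for any} $x\in \Re$, which is literally correct only when $f$ is entire ($R=+\infty$); for $f$ merely analytic at $x_{0}$ both the identity and the membership hold on $\{x:|x-x_{0}|<R\}$, and I would state this restriction explicitly.

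For the ``moreover'' part (\ref{TaylorRepresentationPower}), I would simply apply the first part to the function $f^{n}$. For each fixed $n\in \mathbb{N}$ the $n$-th power $f^{n}=[f]^{n}$ is a finite product of functions analytic at $x_{0}$, hence is itself analytic at $x_{0}$; invoking the representation just established for $g=f^{n}$ then yields an analytic $\zeta (x)=x-x_{0}$ and a sequence $\mathbf{b}=[(f^{n})^{(0)}(x_{0}),(f^{n})^{(1)}(x_{0}),\dots ]\in \Re ^{\infty }$ with $f^{n}(x)=T_{\zeta (x),\mathbf{b}}(\mathbb{N})\in \mathcal{T}^{\mathcal{F}}$. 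Here $\zeta$ and $\mathbf{b}$ depend on $n$, which is the intended reading of ``for all $n$'', and the finiteness of $T_{\zeta (x),\mathbf{b}}$ follows verbatim from the absolute-convergence argument above applied to $f^{n}$, subject to the same radius-of-convergence caveat. I note that this reduction is robust: it would go through identically if $f^{n}$ were instead interpreted as the $n$-th derivative $f^{(n)}$, since derivatives of analytic functions are again analytic.
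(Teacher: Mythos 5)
Your proposal is correct and takes essentially the same route as the paper: it chooses $\gamma (x)=x-x_{0}$ and $a_{n}=f^{(n)}(x_{0})$ so that (\ref{TaylorRepresentation}) is just Taylor's theorem read off as an identity of the set function at $\mathbb{N}$, and it obtains (\ref{TaylorRepresentationPower}) by applying the first part to $f^{n}$, which is analytic at $x_{0}$ (the paper argues this via the composition $g\circ f$ with $g(x)=x^{n}$, you via a finite product of analytic functions --- an immaterial difference). The one place you go beyond the paper is the finiteness check: the paper's proof simply writes $T_{\gamma (x),\mathbf{a}}(\mathbb{N})<+\infty $ with no justification, whereas you correctly observe that absolute convergence of the series, hence both the membership $T_{\gamma (x),\mathbf{a}}\in \mathcal{T}^{\mathcal{F}}$ and the identity itself, is guaranteed only for $|x-x_{0}|<R$, the radius of convergence, so that the claim ``for any $x\in \Re $'' is literally valid only when $f$ is entire. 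This caveat is a genuine refinement of the statement that the paper's own proof silently passes over.
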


\begin{proof}
Trivially, by Taylor's theorem we can write%
\begin{equation*}
f(x)=\sum_{n=0}^{+\infty }\frac{f^{(n)}(x_{0})}{n!}(x-x_{0})^{n}=T_{\gamma
(x),\mathbf{a}}(\mathbb{N})<+\infty ,
\end{equation*}%
with $\gamma (x)=x-x_{0},$ analytic and $\mathbf{a}%
=[f(x_{0}),f^{(1)}(x_{0}), $ $f^{(2)}(x_{0}),...].$\newline
In addition, for any $n\in \mathbb{N},$ since composition of analytic
functions is an analytic function, let $g(x)=x^{n},$ an analytic function,
and write $f^{n}(x)=(g\circ f)(x)=T_{\zeta (x),\mathbf{b}}(\mathbb{N}),$ for 
$\zeta (x)=x-x_{0},$ and some $\mathbf{b}\in \Re ^{\infty }$.
\end{proof}

The Taylor measure representation of an analytic function is an immediate
and trivial consequence of Taylor's theorem, but we note that the
representation is not unique. In particular, if $f(x)=T_{\gamma (x),\mathbf{a%
}}(\mathbb{N}),$ where $\gamma $ is analytic with $\gamma (x)\neq x-x_{0},$
then we can always find $\mathbf{c}\in \Re ^{\infty }$, such that 
\begin{equation}
f(x)=T_{\gamma (x),\mathbf{a}}(\mathbb{N})=T_{x-x_{0},\mathbf{c}}(\mathbb{N}%
),  \label{RepresNotUnique}
\end{equation}%
provided that $\sum_{n=0}^{+\infty }\frac{a_{n}}{n!}<+\infty .$ To see this,
since $\gamma (x)^{n}=T_{x-x_{0},\mathbf{b}}(\mathbb{N}),$ for some $\mathbf{%
b}\in \Re ^{\infty }$, we use equation (\ref{TaylorRepresentationPower}),
and write%
\begin{eqnarray*}
f(x) &=&T_{\gamma (x),\mathbf{a}}(\mathbb{N})=\sum_{n=0}^{+\infty }a_{n}%
\frac{\gamma (x)^{n}}{n!}=\sum_{n=0}^{+\infty }\frac{a_{n}}{n!}T_{x-x_{0},%
\mathbf{b}}(\mathbb{N}) \\
&=&\sum_{n=0}^{+\infty }\frac{a_{n}}{n!}\sum_{l=0}^{+\infty }\frac{b_{l}}{l!}%
(x-x_{0})^{l}=\sum_{k=0}^{+\infty }\frac{c_{k}}{k!}(x-x_{0})^{k},
\end{eqnarray*}%
where%
\begin{equation*}
c_{k}=b_{k}\sum_{n=0}^{+\infty }\frac{a_{n}}{n!},
\end{equation*}%
provided that the series converges. As a result, one can assume wlog the
representation of equation (\ref{RepresNotUnique}) for any analytic
function. Furthermore, since $\gamma $ is analytic, it is continuous, and
therefore, by construction $f\in \mathcal{C}_{\Re }^{\Re },$ where $\mathcal{%
C}_{\Re }^{\Re }$ denotes the space of real valued, continuous functions
from $\Re .$

Now consider the space of analytic functions%
\begin{equation}
\mathcal{G}_{\mathbb{N}}=\{f:f(x)=T_{x-x_{0},\mathbf{a}}(\mathbb{N}),\text{
for some }\mathbf{a}=[a_{0},a_{1},...]\in \Re ^{\infty },T_{x-x_{0},\mathbf{a%
}}\in \mathcal{T}^{\mathcal{F}}\},  \label{TaylorFunctionSpace}
\end{equation}%
with $\mathcal{G}_{\mathbb{N}}\subset \mathcal{C}_{\Re }^{\Re }.$ We provide
some insight on the structure of $\mathcal{G}_{\mathbb{N}},$ in the
following.

\begin{lemma}
\label{AlgebraFuncs}The space $\mathcal{G}_{\mathbb{N}}$\ is an algebra of
functions that includes constant functions and separates points, i.e., it is
a vector space of functions that is also closed under pointwise
multiplication, and for each $x\neq y$ there is a function $f\in \mathcal{G}%
_{\mathbb{N}},$ with $f(x)\neq f(y).$
\end{lemma}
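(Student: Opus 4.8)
The plan is to verify the four asserted properties in turn, treating each $f\in \mathcal{G}_{\mathbb{N}}$ as the everywhere-convergent power series $f(x)=T_{x-x_{0},\mathbf{a}}(\mathbb{N})=\sum_{n=0}^{+\infty }a_{n}\frac{(x-x_{0})^{n}}{n!}$ determined by its coefficient sequence $\mathbf{a}$, and exploiting that such an $f$ is the restriction to $\Re $ of an entire function, since by membership in $\mathcal{G}_{\mathbb{N}}$ the defining series has $T_{x-x_{0},\mathbf{a}}\in \mathcal{T}^{\mathcal{F}}$ and hence converges for every $x$.

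First I would establish the vector-space structure. Given $f,g\in \mathcal{G}_{\mathbb{N}}$ with coefficient sequences $\mathbf{a},\mathbf{b}$ (both anchored at the common center $x_{0}$, so the parameter $\gamma (x)=x-x_{0}$ is shared), for any scalars $\alpha ,\beta \in \Re $ the function $\alpha f+\beta g$ admits the power-series representation with coefficient sequence $\alpha \mathbf{a}+\beta \mathbf{b}$; because $\gamma $ is held fixed this is the direct specialization of Lemma \ref{LemmaLinearity}, requiring no change of $\gamma $. Since both series converge for all $x$, so does their linear combination, whence $T_{x-x_{0},\alpha \mathbf{a}+\beta \mathbf{b}}\in \mathcal{T}^{\mathcal{F}}$ and $\alpha f+\beta g\in \mathcal{G}_{\mathbb{N}}$. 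Constant functions are obtained by taking $\mathbf{a}=[c,0,0,\dots ]$, which gives $f\equiv c$ with a trivially finite Taylor measure; and point separation follows by exhibiting the coordinate function: the choice $\mathbf{a}=[0,1,0,0,\dots ]$ yields $f(x)=x-x_{0}\in \mathcal{G}_{\mathbb{N}}$, so that for any $x\neq y$ we have $f(x)-f(y)=x-y\neq 0$.

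The substantive step, and the main obstacle, is closure under pointwise multiplication. For $f,g$ as above I would form the product and collect powers of $(x-x_{0})$ by the Cauchy product; because of the $1/n!$ weights the convolution is binomial, giving
\begin{equation*}
f(x)g(x)=\sum_{k=0}^{+\infty }c_{k}\frac{(x-x_{0})^{k}}{k!},\qquad c_{k}=\sum_{j=0}^{k}\binom{k}{j}a_{j}b_{k-j}.
\end{equation*}
It then remains to confirm that $T_{x-x_{0},\mathbf{c}}\in \mathcal{T}^{\mathcal{F}}$, i.e., that this series converges for every $x$. Since $f$ and $g$ are entire, their expansions about $x_{0}$ converge absolutely on all of $\Re $, so by Mertens' theorem the Cauchy product converges to $fg$ with infinite radius of convergence; equivalently, a product of entire functions is entire. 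Hence $fg=T_{x-x_{0},\mathbf{c}}(\mathbb{N})$ lies in $\mathcal{G}_{\mathbb{N}}$, so $\mathcal{G}_{\mathbb{N}}$ is an algebra. The only delicate point is the convergence bookkeeping for the binomial convolution, which is controlled precisely by the absolute convergence of both factors at every $x$.
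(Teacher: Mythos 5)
Your proposal is correct and follows essentially the same route as the paper's proof: direct verification of the four properties via power-series manipulation, with linear combinations handled through coefficient sequences at the common center $x_{0}$, closure under products via the Cauchy (binomial) convolution, and explicit witnesses for constants and point separation. If anything, your version is slightly more careful than the paper's: you justify convergence of the product series (via Mertens' theorem / entirety of products of entire functions) where the paper merely asserts the existence of coefficients $b_{l}$, and your witnesses $\mathbf{a}=[c,0,0,\dots]$ and $f(x)=x-x_{0}$ stay within the form $T_{x-x_{0},\mathbf{a}}$ required by the definition of $\mathcal{G}_{\mathbb{N}}$, whereas the paper's constant and separation arguments use measures with $\gamma =1$ and $\gamma =f(x)$ respectively.
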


\begin{proof}
Let $f_{1},f_{2}\in \mathcal{G}_{\mathbb{N}},$ with%
\begin{equation*}
f_{k}(x)=\sum\limits_{n\in \mathbb{N}}a_{kn}\frac{(x-x_{0})^{n}}{n!},
\end{equation*}%
for $a_{kn}\in \Re ,$ $k=1,2$, $n\in \mathbb{N}.$ Since $\mathcal{T}^{%
\mathcal{F}}$ is a Hilbert space, $\mathcal{G}_{\mathbb{N}}$ is a vector
space. Moreover, we can write%
\begin{eqnarray*}
f_{1}(x)f_{2}(x) &=&\left( \sum_{n=0}^{+\infty }a_{1n}\frac{(x-x_{0})^{n}}{n!%
}\right) \left( \sum_{n=0}^{+\infty }a_{2n}\frac{(x-x_{0})^{n}}{n!}\right) \\
&=&\sum_{n,k=0}^{+\infty }a_{1n}a_{2k}\frac{(x-x_{0})^{n+k}}{n!k!}%
=\sum_{l=0}^{+\infty }b_{l}\frac{(x-x_{0})^{l}}{l!}\in \mathcal{G}_{\mathbb{N%
}},
\end{eqnarray*}%
for some $b_{l}\in \Re $ and any $x\in \Re $, so that $\mathcal{G}_{\mathbb{N%
}}$ is closed under pointwise multiplication. Clearly, for any constant $%
c\in \Re ,$ take $\gamma =1,$ and $\mathbf{a}(x)=[0,c,0,...],$ so that $%
c=T_{\gamma ,\mathbf{a}(x)}(\mathbb{N})\in \mathcal{G}_{\mathbb{N}}.$ Now
take arbitrary $x,y\in \Re $, with $x\neq y,$ choose any 1 to 1 analytic
function $f,$ and $\mathbf{a}(x)=[0,1,0,...],$ so that $f(x)=T_{f(x),\mathbf{%
a}(x)}(\mathbb{N})\in \mathcal{G}_{\mathbb{N}}$, is such that $f(x)\neq
f(y), $ and therefore, $\mathcal{G}_{\mathbb{N}}$ separates points.
\end{proof}

Finding dense subsets in spaces of functions is a crucial, well known and
studied problem for spaces of continuous functions. Recall the classic
Stone-Weierstrass theorem from functional analysis (e.g., \cite{Dudley2004},
Theorem 2.4.11), which requires properties for the dense space as in Lemma %
\ref{AlgebraFuncs}, and a compact Hausdorff space $K,$ so that the set of
functions is dense in $\mathcal{C}_{\Re }^{K}$. The space of functions $%
\mathcal{G}_{\mathbb{N}}$ satisfies almost all requirements ($\Re $ is
Hausdorff), but $\Re $ is not compact. Therefore, in order to apply the
Stone-Weierstrass theorem for $\mathcal{G}_{\mathbb{N}}$ we consider the
restriction of $\mathcal{G}_{\mathbb{N}}$ to analytic functions defined over
a compact subset $K\subset \Re $, i.e., let%
\begin{equation*}
\mathcal{G}_{\mathbb{N}}^{K}=\{f:f(x)=T_{x-x_{0},\mathbf{a}}(\mathbb{N}),%
\text{ }x\in K\subset \Re ,\text{ }\mathbf{a}=[a_{0},a_{1},...]\in \Re
^{\infty },T_{x-x_{0},\mathbf{a}}\in \mathcal{T}^{\mathcal{F}}\},
\end{equation*}%
so that $\mathcal{G}_{\mathbb{N}}^{K}$ is dense in $\mathcal{C}_{\Re }^{K}$
by Stone-Weierstrass, using $d_{sup}(f,g)=\underset{x\in K}{\sup }%
|f(x)-g(x)| $ norm.

Now let $\mu $ denote Lebesgue measure in $(\Re ,\mathcal{B}(\Re )),$ and
consider the usual inner product%
\begin{equation}
(f_{1},f_{2})=\int\limits_{\Re }f_{1}(x)f_{2}(x)d\mu (x).
\label{InnerProduct1}
\end{equation}%
Then all the usual results we are familiar with from functional analysis
(e.g., see \cite{Dudley2004}, Chapter 5) also hold for the space $\mathcal{G}%
_{\mathbb{N}},$ with some additional assumptions in order to handle
unbounded functions and the integrals involved, e.g., for any $f\in \mathcal{%
G}_{\mathbb{N}}$ we require that $\int\limits_{\Re }|f|^{p}d\mu <+\infty ,$ $%
1\leq p<+\infty $. We denote this restricted space by $\mathcal{G}_{\mathbb{N%
}}^{\mathcal{L}^{p}}.$ Then using the $\mathcal{L}^{p}$-norm, $\left\Vert
f\right\Vert _{p}=\left( \int\limits_{\Re }|f|^{p}d\mu \right) ^{\frac{1}{p}%
},$ we have immediately that $\mathcal{G}_{\mathbb{N}}^{\mathcal{L}%
^{p}}\subset \mathcal{L}^{p}(\Re ,\mathcal{B}(\Re ),\mu ),$ i.e., $\mathcal{G%
}_{\mathbb{N}}^{\mathcal{L}^{p}}$ is a subset of the Lebesgue $\mathcal{L}%
^{p}$ space, and thus inherits its properties.

\section{Concluding Remarks}

We introduced and studied properties of a novel collection of signed
measures, Taylor measures. Whilst spaces of general signed measures are
typically studied using total variation norm, the latter was not useful in
proving desirable properties of the space $\mathcal{T}^{\mathcal{F}}$.
Instead, we proposed a new inner product $\rho \left( .,.\right) $ which
allowed us to prove that the space $\mathcal{T}^{\mathcal{F}}$ is a Hilbert
and a Polish space. As a result, the proposed space $\mathcal{T}^{\mathcal{F}%
}$ is well behaved, meaning that all important concepts and desirable
properties from real analysis and measure theory are present, including the
existence of orthonormal bases, dense subsets, and reproducing formulas of
elements of $\mathcal{T}^{\mathcal{F}},$ to name but a few.

We presented first applications of the proposed measures including the
creation of a new collection of discrete probability measures, the positive
and negative Taylor probability measures. More importantly, the positive
Taylor probability measure allowed us to describe any discrete probability
measure and its discrete density, under the standard assumption of absolute
continuity wrt counting measure.

Moreover, we defined stochastic versions of Taylor measures, and illustrated
via examples, that they provide a unifying framework for many important
concepts from statistics and probability theory, including, Brownian motion,
martingales, random walks and time series. STMs also provide us with a way
of estimating the values of a Taylor measure using statistical modeling and
simulation.

We further used $\mathcal{T}^{\mathcal{F}}$ to create a space of functions
that allowed us to show that $\mathcal{T}^{\mathcal{F}}$ is a generalization
of Taylor's classic theorem for analytic functions. Generalizations to the
multivariate case, as well as extensions to measurable functions $\gamma $
are also under current investigation. In addition, the space $\mathcal{G}_{%
\mathbb{N}}^{\mathcal{L}^{p}}$ serves as the starting point for the
investigation of the continuous analog to Theorem \ref{DTaylorPMRepres}.
Furthermore, one can define the Taylor integral, which leads to the
definition of Taylor measure differential equations, and more importantly,
stochastic (partial) Taylor measure differential equations. These initial
results were not included in this paper, since our purpose herein was to
introduce the measures and their first applications in mathematics and
probability theory.

These are subjects of great interest, in further illustrating the importance
of $\mathcal{T}^{\mathcal{F}}$, and will be presented elsewhere.

\section{Statements and Declarations}

The author has no financial or non-financial conflicts of interest that are
directly or indirectly related to this work submitted for publication.

\bibliographystyle{plain}
\bibliography{TaylorMeasure}

\end{document}